\newtheorem{theorem}{Theorem}[section]
\newtheorem{lemma}[theorem]{Lemma}
\newtheorem{assumption}[theorem]{Assumption}
\theoremstyle{definition}
\newcommand{\ba}{\begin{array}}
	\newcommand{\ea}{\end{array}}
\newcommand{\bea}{\begin{eqnarray}}
	\newcommand{\eea}{\end{eqnarray}}
\renewcommand{\P}{\mathbb{P}}
\newcommand{\R}{{\mathbb{R}}}
\numberwithin{equation}{section}
\begin{document}
	
\title[A derivative-free regularization algorithm for ECNLS]
{A derivative-free regularization algorithm for equality constrained nonlinear least squares problems}

\author{Xi Chen}
\address{School of Mathematical Sciences, Shanghai Jiao Tong University, Shanghai 200240, China}
\email{chenxi\_1998@sjtu.edu.cn}
	
\author{Jinyan Fan*}
\address{School of Mathematical Sciences, and MOE LSC, Shanghai Jiao Tong University, Shanghai 200240,		China}
\email{jyfan@sjtu.edu.cn}
	
\thanks{* Corresponding author}
\thanks{The authors are supported by the Science and Technology Commission of Shanghai Municipality grant 22JC1401500, the National Natural Science Foundation of China grant 12371307 and the Fundamental Research Funds for the Central Universities.}
	
\subjclass[2020]{65K05, 65K10, 90C30, 90C56}
	
\keywords{Derivative-free optimization, orthogonal spherical smoothing, probabilistic gradient models, regularization, global convergence}
	
\maketitle
	
\begin{abstract}
In this paper, we study the equality constrained nonlinear least squares problem, where the Jacobian matrices of the objective function and constraints are unavailable or expensive to compute.
We approximate the Jacobian matrices via orthogonal spherical smoothing and propose a derivative-free regularization algorithm for solving the problem.
At each iteration, a regularized augmented Lagrangian subproblem is solved to obtain a Newton-like step. If a sufficient decrease in the merit function of the approximate KKT system is achieved, the step is accepted, otherwise a derivative-free LM algorithm is applied to get another step to satisfy the sufficient decrease condition.
It is shown that the algorithm either finds an approximate KKT point with arbitrary high probability or converges to a stationary point of constraints violation almost surely.

\end{abstract}

\section{Introduction}
Nonlinear least-squares problems have important applications in data fitting, parameter estimation and function approximation. They appear frequently in nuclear physics, aircraft design, aeronautical engineering and so on, where the functions themselves may not be in an analytical closed form, permitting function evaluations but not gradient assessments.
In this paper, we consider the equality constrained nonlinear least squares problem
\begin{align}\label{prob}
	\min_{x\in\mathbb{R}^n} \ \	& \frac{1}{2}\|r(x)\|^2,\nonumber\\
		\text{s.t.} \ \ &c(x)=0,
\end{align}
where  $r(x):\mathbb{R}^n \to \mathbb{R}^p$, $c(x):\mathbb{R}^n \to \mathbb{R}^m$ are continuously differentiable but explicit expressions of their derivatives are difficult to compute or infeasible
to obtain, and $\|\cdot\|$ refers to the standard Euclidian norm.	
Hence traditional nonlinear least-squares methods that use derivative information are not applicable
anymore.
	
It is worth noting that derivative-free methods had been studied by the optimization community long before. They can be classified into the categories: direct search-based methods, model-based methods and the others.
Direct search methods include the Nelder–Mead simplex method \cite{NM65}, coordinate search method \cite{Fermi52}, pattern search method \cite{Torczon91}, mesh adaptive direct search \cite{mads2009,mads,AUDET2020467}, etc.
Gratton et al. \cite{Gratton2019} propose a direct search method based on probabilistic feasible descent for bound and linearly constrained problems.

Powell developed  derivative-free trust-region methods UOBYQA and NEWUOA for unconstrained optimization  \cite{PowellUOBYQA,PowellNEWUOA},
and BOBYQA, LINCOA and COBYLA for bound constrained, linear constrained and general nonlinear constrained optimization, respectively \cite{PowellBOBYQA, PowellLINCOA,PowellCOBYLA},
where BOBYQA and  LINCOA build  quadratic polynomial models by the derivative-free symmetric Broyden update, and  COBYLA constructs linear polynomial models to approximate the objective function and constraints.
Tröltzsch \cite{Troltzsch2016} presented a sequential quadratic programming algorithm for  equality constrained optimization without derivatives \cite{Troltzsch2016}.
Sampaio \cite{Sampaio2021} gave the algorithm DEFT-FUNNEL for general constrained grey-box and black-box problems that belongs to the class of trust-region sequential quadratic optimization algorithms and uses the polynomial interpolation models as surrogates for the black-box functions.
Interested readers are referred to \cite{ AUDET2017, Conn09b, larson} for more model-based derivative-free optimization methods.

Derivative-free methods that use finite differences or smoothing technique to compute estimates of gradients
also attract increasing attention in recent years \cite{FD,FDLM,liuzhong}.
For example, Ge et al. \cite{soln} proposed the derivative-free solver SOLNP+ for constrained nonlinear optimization which uses finite difference to approximate the gradients and incorporates the techniques of implicit filtering, new restart mechanism and modern quadratic programming.
Flaxman et al. \cite{Flax} used spherical smoothing to approximate the gradients
and	Nesterov et al. \cite{YN} designed a kind of random gradient-free oracle by Gaussian smoothing.
Kozak et al. \cite{DavidKozak} approximated the exact gradient by finite differences computed in a set of orthogonal random directions that changes with each iteration.

For unconstrained  nonlinear least squares problems,
Zhang et al. \cite{ZhangHC} built quadratic interpolation models for each $r_i$ and gave a framework for a class of derivative-free algorithms within a trust region.
Cartis et al. \cite{DFOGN} constructed linear interpolation models of all $r_i$ and presented a derivative-free version of the Gauss-Newton method, further linear regression models were used to improve the flexibility and robustness.
Zhao et al. \cite{zhao} built the probabilistically first-order accurate Jacobian models with random points generated by the standard Gaussian distribution and gave a derivative-free Levenberg-Marquardt (LM) algorithm.
Brown et al. \cite{DFOLM} estimated the Jacobian matrices by coordinate-wise finite difference and gave the derivative-free analogues of the LM algorithm and Gauss-Newton algorithm.
Recently, the authors \cite{cx} proposed a derivative-free LM algorithm that approximates the Jacobian matrices via orthogonal spherical smoothing; it is shown that the gradient models with such Jacobians are probabilistically first-order accurate and the algorithm converges globally almost surely.

For constrained nonlinear least squares problems,
Hough et al. \cite{Hough} presented a model-based derivative-free method for optimization subject to unrelaxable convex constraints, where the constraint set is available to the algorithm through a projection operator that is inexpensive to compute.
However, to the best of our knowledge, there are no existing derivative-free methods designed for general constrained nonlinear least squares problems.    	

In this paper, we propose a derivative-free regularization algorithm for equality constrained nonlinear least squares problem \eqref{prob}, where  the Jacobian matrices of the objective function and constraints are unavailable or expensive to compute.
We estimate the Jacobian matrices via orthogonal spherical smoothing.
Based on the framework of the augmented Lagrangian method,
a regularization subproblem is solved at each iteration to obtain a  Newton-like step.
If a sufficient decrease in the merit function of the approximate KKT system is achieved,
the step is accepted, otherwise an improved derivative-free LM algorithm presented in \cite{cx} is used to get a step to satisfy the sufficient decrease condition.
It is shown that the algorithm either finds an approximate KKT point with high probability
or converges to a stationary point of the constraints almost surely.

The paper is organized as follows.
In Section \ref{sec2}, we approximate the Jacobian matrices via orthogonal spherical smoothing
and propose a derivative-free regularization algorithm for \eqref{prob}.
The properties and global convergence of the algorithm are studied in Section \ref{sec3}.
Finally, some numerical experiments are presented in Section \ref{sec4}.

\section{A derivative-free regularization algorithm for \eqref{prob}}\label{sec2}
In this section, we approximate the Jacobian matrices via orthogonal spherical smoothing, then propose a derivative-free regularization for solving the equality constrained nonlinear least squares problem \eqref{prob}.

\subsection{Approximating Jacobian matrices via orthogonal spherical smoothing}	
Define the spherical smoothed version of $r_i(x), i=1,\ldots,p$ as
	\begin{equation}\label{spsmoo}
		\tilde r_{i}(x)=\mathbb{E}_{v\sim U(\mathbb B)} [r_i(x+\gamma v)],
	\end{equation}
	where $\mathbb B$ represents the unit ball in $\R^n$ and  $U(\mathbb B)$ the uniform distribution on  $\mathbb B$,     $\gamma$ is the smoothing parameter.
It then follows from \cite{Flax} that
	\begin{equation}\label{ssmooder}
		\nabla \tilde r_{i} (x)= \frac{n}{\gamma} \mathbb{E}_{u\sim U(\mathbb S)} 	[r_i(x+\gamma u)u],
	\end{equation}
where $\mathbb S$ represents the unit sphere in $\R^n$ and $U(\mathbb S)$ the 	uniform distribution on $\mathbb S$. 	

We take
	\begin{align}\label{nabrrd}
		\nabla r_{s,i}(x)= \sum_{j=1}^n \frac{r_i(x+\gamma 	u_j)-r_i(x)}{\gamma}u_j
	\end{align}
as a gradient approximation of $ r_i(x)$,
where $[u_1,\ldots,u_n]$ is uniformly sampled from the Stiefel manifold $ St(n)=\{U\in \R^{n \times n}: U^TU=I_n\}$, and can be obtained by using the Gram-Schmidt orthogonalization on $n$ independent random vectors with identical distribution 	$\mathcal{N}(0,I)$ (cf. \cite{wang, Chikuse2003}).
	Since the  distribution for such $u_j$ is uniform over $\mathbb S$ (cf.	\cite{DavidKozak}), we have
	\begin{align}
\mathbb{E}_u [\nabla r_{s,i}(x)] =\sum_{j=1}^n \mathbb{E}_u 	\left[\frac{r_i(x+\gamma u_j)-r_i(x)}{\gamma}u_j\right]	=\nabla \tilde r_{i} (x).
\end{align}
We take
\begin{align}\label{JArd}
		J^r_{s}(x)= \left(\nabla r_{s,1}(x), \ldots, \nabla r_{s,p}(x)\right)^T
\end{align}
as the approximate Jacobian matrix of $r(x)$ and
\begin{align}\label{JBrd}
		J^c_{s}(x)= \left(\nabla c_{s,1}(x), \ldots, 	\nabla c_{s,m}(x)\right)^T,
\end{align}
as the approximate Jacobian matrix of $c(x)$, where $\nabla c_{s,i}(x)$ is computed as \eqref{nabrrd}.

\subsection{A derivative-free regularization method for solving \eqref{prob}}
The method to be proposed is based on the framework of the augmented Lagrangian method.	
The  Lagrangian function for \eqref{prob} is
\begin{align}\label{Lag}
L(x,y)=\frac{1}{2}\|r(x)\|^2 -y^Tc(x),
\end{align}
where $y\in \mathbb{R}^{m}$ is the Lagrange multiplier.
At the $k$-th iteration, fixing the Lagrange multiplier $y_k$ and the penalty parameter  $\delta_k^{-1}$, the augmented Lagrangian method solves the subproblem
\begin{align}
		\min_{x\in\mathbb{R}^{n}} \ \frac{1}{2}\|r(x)\|^2-y_k^Tc(x)+ \frac{1}{2}\delta_k^{-1} \|c(x)\|^2,
	\end{align}
which is equivalent to
	\begin{align}\label{r1}
		\min_{(x,v)\in\mathbb{R}^{n+m}} \quad
		&\frac{1}{2}\|r(x)\|^2  	+ \frac{1}{2}\delta_k 	\|v+y_k\|^2\nonumber\\
		\text{s.t.}\quad\ \quad & c(x)+\delta_k v=0.
	\end{align}
In \cite{Orban2020}, Orban et al. apply the regularization scheme and introduce $z=r(x)$
to avoid the occurrence of $J^r(x)^TJ^r(x)$, where $J^r(x)$ is the Jacobian matrix of $r(x)$,
which typically contributes to dense and ill conditioned subproblems.
In this paper, we follow their idea and solve the subproblem:
	\begin{align}\label{rglar}
		\min_{(x,v,z)\in\mathbb{R}^{n+m+p}} \quad
		&\frac{1}{2}\|z\|^2+ \frac{1}{2} \rho_k\|x-x_k\|^2+	\frac{1}{2}\delta_k	\|v+y_k\|^2\nonumber\\
		\text{s.t.} \qquad \quad & r(x)-z=0,\nonumber\\
		&c(x)+\delta_k v=0,
	\end{align}
where $\rho_k$ is a regularization parameter.
	
Note that the Jacobian matrices of $r(x)$ and $c(x)$ considered in this paper
are unavailable or expensive to compute.
We define the approximate KKT conditions for (\ref{rglar}) by replacing the exact Jacobian matrices with approximate ones:
\begin{align}\label{KKT1}
		\rho_k(x-x_k)-J_s^r(x)^Ts-J_s^c(x)^Ty&=0, \nonumber\\
		z+s&=0, \nonumber\\
		\delta_k(v+y_k)-\delta_k y&=0,\\
		r(x)-z&=0, \nonumber\\
		c(x)+\delta_k v&=0, \nonumber
\end{align}
where $J^r_{s}(x)$ and $J^c_{s}(x) $ are approximate Jacobian matrices of $r(x)$ and $c(x)$ respectively, $s$ and $y$ are Lagrange multipliers.
Since $s=-z$ and $v=y-y_k$,   (\ref{KKT1}) becomes
	\begin{align}\label{KKT2}
		\rho_k(x-x_k)+J_s^r(x)^Tz-J_s^c(x)^Ty&=0, \nonumber\\
		r(x)-z&=0, \\
		c(x)+\delta_k (y-y_k)&=0. \nonumber
	\end{align}

	Denote $w:=(x,z,y)$, $w_k:=(x_k,z_k,y_k)$, and define
	\begin{align}\label{Grd}
		F_{s_k}(w;x_k,y_k,\rho_k,\delta_k):=	\begin{bmatrix}
			\rho_k(x-x_k)+J^r_{s_k}(x)^Tz-J^c_{s_k}(x)^Ty\\
			r(x)-z\\
			c(x)+\delta_k (y-y_k)
		\end{bmatrix},
	\end{align}
	where $J^r_{s_k}(x)$ and $J^c_{s_k}(x)$ are defined by (\ref{JArd}) and (\ref{JBrd}) with $\gamma_k$.
One can apply Newton-like method to
\begin{align}\label{newton}
	F_{s_k}(w;x_k,y_k,\rho_k,\delta_k)=0
\end{align}
sequentially to find an approximate KKT point.
Let $T_{s_k}$ be an approximation of the Jacobian matrix of $F_{s_k}(w;x_k,y_k,\rho_k,\delta_k)$ at $w_k$.
 Denote $J^r_{s_k}:=J^r_{s_k}(x_k)$, $J^c_{s_k}:=J^c_{s_k}(x_k)$  and $F_{s_k}:=F_{s_k}(w_k;x_k,y_k,\rho_k,\delta_k)$.
Then solving the linear system
\begin{align}
T_{s_k} d_w=-F_{s_k}
\end{align}
can give an inexact solution of \eqref{newton}.
In practice, one can adjust $d_y$ to $-d_y$ to make the coefficient matrix of the linear system symmetric so that a symmetric indefinite factorization can be used (cf. \cite{MA57}).
That is, an inexact Newton step $d_{w_k}=(d_{x_k}, d_{	z_k}, d_{y_k} )$ is computed by solving the system
	\begin{align} \label{sysrd}
		\begin{bmatrix}
			H_{s_k}+\rho_k I & (J^r_{s_k})^T & (J^c_{s_k})^T \\
			J^r_{s_k} & -I & 0\\
			J^c_{s_k} & 0 & -\delta_k I
		\end{bmatrix}\begin{bmatrix}
			d_{x}  \\
			d_{	z}\\
			-d_{y}
		\end{bmatrix}
		=\begin{bmatrix}
			-(J^r_{s_k})^Tz_k+(J^c_{s_k})^Ty_k \\
			z_k-r(x_k) \\
			-c(x_k)
		\end{bmatrix},
	\end{align}
where $H_{s_k}$ is a symmetric approximation of $H(x,z_k,y_k)$ at $x_k$ with
	\begin{align}\label{Hrd}
		H(x,z,y)=\sum_{i=1}^{p} z_i \nabla^2 r_i(x)
		-\sum_{i=1}^{m} y_i \nabla^2
		c_i(x).
	\end{align}
	
Define the merit function
\begin{align*}
		\|F_{s_k}\|_*=\|(J^r_{s_k})^T z_k-(J^c_{s_k})^T y_k\|+\|z_k-r(x_k)\|+	\|c(x_k)\|.
\end{align*}
If the trial step $d_{w_k}$	can lead to a sufficient decrease, i.e.,
\begin{align}\label{innerstoprd}
		\|F_{s_{k+1}}\|_* \leq \theta\|F_{s_k}\|_*+\epsilon_k,
\end{align}
where $0<\theta<1$, $0< \epsilon_k\downarrow 0$,
we accept it and  take $w_{k+1}=w_k+d_{w_k}$.
Otherwise we go to inner iterations to minimize the augmented Lagrangian function so that a trial step can be obtained to satisfy \eqref{innerstoprd}. That is, we solve
\begin{align}\label{alrd}
	\min_x\	\phi_k(x;\delta)=\frac{1}{2}\|r(x)\|^2-y_k^Tc(x)+
		\frac{1}{2}\delta^{-1}\|c(x)\|^2,
\end{align}
where $\delta$ may be updated in inner iterations.

Actually, \eqref{alrd} is equivalent to the nonlinear least square problem
	\begin{align}\label{inLS}
		\min_x\ \frac{1}{2}\|\Phi_k(x;\delta)\|^2:=\frac{1}{2}
		\left\|\begin{bmatrix}
			r(x)\\
			\delta^{-1/2}(c(x)-\delta y_k)
		\end{bmatrix}\right\|^2.
	\end{align}
It can be solved by a derivative-free LM method.
Let
\begin{align*}
		J^{\Phi}_s(x;\delta)=\begin{bmatrix}
			J^r_s(x)\\
			\delta^{-1/2}J^c_s(x)
		\end{bmatrix}
	\end{align*}
and
\begin{align}\label{phik}
\nabla\phi_{k,s}(x;\delta)=J^r_{s}(x)^Tr(x)-J^c_{s}(x)^Ty_k +\delta^{-1} J^c_{s}(x)^T c(x)
\end{align}
be the approximate Jacobian matrix of $\Phi_k(x;\delta)$ and approximate gradient of $\phi_k(x;\delta)$ with respect to $x$, respectively.
At the $j$-th inner iteration of the $k$-th outer iteration, we denote
$$
J_{s_{k,j}} := J^{\Phi}_s(x_{k,j};\delta_{k,j}),
\quad \nabla \phi_{s_{k,j}}:= \nabla \phi_{k,s}(x_{k,j};\delta_{k,j})
$$
for brevity, and solve
\begin{align}\label{LMstep}
(J_{s_{k,j}}^TJ_{s_{k,j}}+\lambda_{k,j} \|\nabla \phi_{s_{k,j}}\| I)d_x=- \nabla \phi_{s_{k,j}}
\end{align}
to obtain the step $d_{ x_{k,j}}$, where $\lambda_{k,j}$ is the parameter updated from iteration to iteration.
We may use two-step QR factorization such as that of \cite{Sun2006} to find the solution of \eqref{LMstep},
which can avoid the calculation of $J_{s_{k,j}}^TJ_{s_{k,j}}$.
Define the ratio
\begin{align}\label{LMratio}
\zeta_{k,j}=\frac{\|\Phi_k(x_{k,j};\delta_{k,j})\|^2-\|\Phi_k(x_{k,j}+d_{ x_{k,j}};\delta_{k,j})\|^2}
				{\|\Phi_k(x_{k,j};\delta_{k,j})\|^2-\|\Phi_k(x_{k,j};
					\delta_{k,j})+J_{s_{k,j}}d_{ x_{k,j}}\|^2}.
\end{align}
It plays an important role in deciding whether $d_{ x_{k,j}}$ is acceptable and how to update $\lambda_{k,j}$.
The parameter $\delta_{k,j}$ will be decreased if
the relatively sufficient decrease on $\|\nabla \phi_{k,s}(x ;\delta)\|$ is achieved
but the constraint violation decrease does not meet the requirement.
	
The derivative-free regularization algorithm for solving \eqref{prob} is presented as follows.

		\begin{algorithm}
		\renewcommand{\algorithmicrequire}{\textbf{Input:}}
		\renewcommand{\algorithmicensure}{\textbf{Output:}}
		\caption{A derivative-free regularization algorithm for solving \eqref{prob}}
		\label{alg1}
		\begin{algorithmic}[1] 
			\REQUIRE $x_0, \delta_0, \rho_0, \epsilon_0, \gamma_0, \theta\in(0,1)$, set $k:=0$.
			\STATE  Initialize approximate Jacobians $J^r_{s_0}$ and $J^c_{s_0}$ with $\gamma_0$.
			\STATE If a stopping condition is  satisfied, stop.
			\STATE  Update $H_{s_k}$, and solve (\ref{sysrd}) to
			obtain $d_{w_k}$.
			\STATE Set $\hat{w}_k=w_k+d_{w_k}$.
			\STATE Compute $J^r_{\hat{s}_k}$ and $J^c_{\hat{s}_k}$ with $\hat{\gamma}_k=\|\hat{x}_k-x_k\|$.
			\STATE If $\|F_{\hat{s}_k}\|_*\leq \theta\|F_{s_k}\|_*+\epsilon_k$,  set
			 $$w_{k+1}=\hat{w}_k, 	\gamma_{k+1}=\hat{\gamma}_k,
			J^r_{s_{k+1}}=J^r_{\hat{s}_k}, J^c_{s_{k+1}}=J^c_{\hat{s}_k},$$
             and  update $\delta_{k+1}>0, \rho_{k+1} \ge 0$.
			
Otherwise, run Algorithm \ref{alg2} to obtain $w_{k+1}$, $J^r_{s_{k+1}}$, $J^c_{s_{k+1}}$,  $\delta_{k+1}$ and $\rho_{k+1}$.

			\STATE Update $\epsilon_{k+1}$.
			\STATE
			Set $k:=k+1$. Go to line 2.
			
			\ENSURE $x_k$, $f(x_k)$, $\|c(x_k)\|$.
		\end{algorithmic}
	\end{algorithm}

	\begin{algorithm}
		\renewcommand{\algorithmicrequire}{\textbf{Input:}}
		\renewcommand{\algorithmicensure}{\textbf{Output:}}
		\caption{A derivative-free LM algorithm for \eqref{inLS}}
		\label{alg2}
		\begin{algorithmic}[1] 
			\REQUIRE $w_{k,0}=w_k$, 
			$F_{s_k}$,
			$\epsilon_k$, $J^r_{s_{k,0}}=J^r_{s_k}$,
			$J^c_{s_{k,0}}=J^c_{s_k}$,	$0<p_0<p_1<1<p_2$,
			$0<p_3<1<p_4$,
			$0<\lambda_{\min}<\lambda_{k,0}$,
			$\gamma_{k,0}=\gamma_{k}$,
			$\delta_{k,0}=\delta_{k,-1}=\delta_{k}$, set $j:=0$.
			\STATE
			Solve (\ref{LMstep}) to obtain $d_{x_{k,j}} $.
			\STATE
			Compute  $\zeta_{k,j}$ by \eqref{LMratio}.
			\STATE Set
			\begin{align}
				x_{k,j+1}=
				\begin{cases}
					x_{k,j}+d_{ x_{k,j}},& \mbox{if}\ \zeta_{k,j}\ge p_0,\\
					x_{k,j}, & \mbox{otherwise}.
				\end{cases}
			\end{align}
		\STATE	If $\zeta_{k,j}< p_0$, compute $\lambda_{k,j+1}=4\lambda_{k,j}$.
			Otherwise, compute
			\begin{equation}\label{2.7a}
				\lambda_{k,j+1}=
				\begin{cases}
					4\lambda_{k,j},  & \mbox{if}\  \|J_{s_{k,j}}^T
					\Phi_k(x_{k,j};\delta_{k,j})\|<
					\frac{p_1}{\lambda_{k,j}},\\
					\lambda_{k,j},   & \mbox{if}\
					\|J_{s_{k,j}}^T \Phi_k(x_{k,j};\delta_{k,j})\|\in
					[\frac{p_1}{\lambda_{k,j}},\frac{p_2}{\lambda_{k,j}}),\\
					\max\{0.25\lambda_{k,j},\lambda_{\min}\}, &
					\mbox{otherwise}.
				\end{cases}
			\end{equation}
			\STATE Set $z_{k,j+1}=r(x_{k,j+1}).$
			\STATE If $\delta_{k,j}=\delta_{k,j-1}$, compute $\gamma_{k,j+1}=\min\{\frac{1}{2}\gamma_{k,j}, \|d_{x_{k,j}}\| \}$.
				%
Otherwise, compute
			\begin{equation}\label{gamaupd}
				\gamma_{k,j+1}=
				\begin{cases}
					\frac{1}{2} \gamma_{k,j}, &	\mbox{if}\	\|(J^c_{s_{k,j}})^T c(x_{k,j})\|<
					p_3\gamma_{k,j},\\
               		\gamma_{k,j},   & \mbox{if}\
					\|(J^c_{s_{k,j}})^T
					c(x_{k,j})\|\in
					[p_3\gamma_{k,j},p_4\gamma_{k,j}),\\
  \min\{2\gamma_{k,j},\gamma_{k}\},    &
					\mbox{otherwise}.
				\end{cases}
			\end{equation}

			\STATE
			Compute $J^r_{s_{k,j+1}}$ and $J^c_{s_{k,j+1}}$ with $\gamma_{k,j+1}$.
		\STATE	If $\|\nabla_xL_{s_{k,j+1}}(x_{k,j+1},y_k-\delta_{k,j}^{-1}
				c(x_{k,j+1}))  \| \leq \theta \|\nabla_xL_{s_{k}} (x_k,y_k)
				\|+	\frac{1}{2}\epsilon_k$ and $\|c(x_{k,j+1})\|>
				\theta \|c(x_{k})\|		+\frac{1}{2}\epsilon_k $, compute
			 $\delta_{k,j+1}=\frac{1}{10}\delta_{k,j}$.			
 Otherwise, set $\delta_{k,j+1}= \delta_{k,j}$.
			\STATE Set $j:=j+1$.
		Compute	$y_{k,j}=y_k-\delta_{k,j}^{-1}c(x_{k,j})$.

\STATE If $\|F_{s_{k,j}}(w_{k,j})\|_* >  \theta\|F_{s_k}\|_*+\epsilon_k$, go to line 1.
Otherwise, set $w_{k+1}=w_{k,j}$, $J^r_{s_{k+1}}=J^r_{s_{k,j}}$, $J^c_{s_{k+1}}=J^c_{s_{k,j}}$,
			$\delta_{k+1}=\delta_{k,j}$, $\rho_{k+1}=\lambda_{k,j} \|\nabla
			\phi_{k,s_j}(x_{k,j};\delta_{k,j})\| $, and return to Algorithm \ref{alg1}.
			
		\end{algorithmic}
	\end{algorithm}



\section{Global convergence of the algorithm}\label{sec3}
In this section, we study the global convergence of Algorithm \ref{alg1}.	
It is shown that Algorithm \ref{alg1} either finds an approximate KKT point with high probability
or converges to a stationary point of the constraints almost surely.

Note that the Jacobian approximations are built in some random fashion,
which implies the randomness of the iterates, the smoothing parameters, etc.
We use uppercase letters to represent random quantities, such as $X_k$, $J^r_{S_k}(X_k)$,  etc.,
and use lowercase letters to represent their realizations, such as $x_k$,
$J^r_{s_k}(x_k)$ etc.
Denote by $\mathcal{F}_{k}$  the $\sigma$-algebra generated by all randomness in
iteration $k$ and all previous iterations.

We first derive the bounds for the difference between the Jacobian matrix and the expectation of its  approximation and the variance of the approximate Jacobian, respectively.
We make the following assumption.
	
\begin{assumption}\label{ass1rd}
(i) $J^r(x)$ and $J^c(x)$ are Lipchitz continuous, i.e., there exist $\kappa_{ljr}, \kappa_{ljc}>0 $  such that for all $x,y \in\mathbb{R}^n$,
\begin{align}
			\|J^r(x)-J^r(y)\| &\leq \kappa_{ljr} \|x-y\|,  \label{lipaed}\\
			\|J^c(x)-J^c(y)\| &\leq \kappa_{ljc} \|x-y\|. \label{lipbed}
\end{align}

(ii) $r(x)$ and $c(x)$ are Lipschitz continuous, i.e., there exist $\kappa_{lr}, \kappa_{lc}>0$ such that for all $x,y \in		\mathbb{R}^n$,
\begin{align}\label{liprrd}
			\|r(x)-r(y)\|& \leq \kappa_{lr} \|x-y\|, \\
			\|c(x)-c(y)\|& \leq \kappa_{lc} \|x-y\|.
\end{align}
		
 \end{assumption}	
	
	\eqref{lipaed} implies that there exists $\kappa_{lgr}>0 $ such that for all $x,y \in\mathbb{R}^n$, 		\begin{align}\label{lrird}
			\|\nabla r_i(x)- \nabla r_i(y)\| \leq \kappa_{lgr} \|x-y\|, \quad i=1,\ldots,p.
		\end{align}
\eqref{lipbed} implies that there exists 	$\kappa_{lgc}>0 $ such that for all $x,y \in\mathbb{R}^n$,
		\begin{align}\label{lcird}
			\|\nabla c_i(x)- \nabla c_i(y)\| \leq \kappa_{lgc}\|x-y\|,\quad i=1,\ldots,m.
		\end{align}

	


\begin{lemma}\label{lemAB}
Suppose that Assumption \ref{ass1rd}(i) holds. Then there exists $d>0$ such that
\begin{align}
			\|\mathbb{E}_u [J^r_{S}(x)]-J^r(x)\| &\leq\frac{d\sqrt{p}n	\kappa_{lgr}}{n+1}	\gamma,		\label{lem1A}\\
			\|\mathbb{E}_u [J^c_{S}(x)]-J^c(x)\| &\leq	\frac{d\sqrt{m}n\kappa_{lgc}}{n+1}\gamma,		\label{lem1B}
\end{align}
and
\begin{align}
			&\mathbb{E}_u \left[ \left\|J^r_{S}(x)-\mathbb{E}_u
			[J^r_{S}(x)] 	\right\|^2\right] \leq  \frac{d^2p n\kappa_{lgr}^2}{4}\gamma^2, \label{lem1varA}\\
			& \mathbb{E}_u \left[ \left\|J^c_{S}(x)-\mathbb{E}_u
			[J^c_{S}(x)] 	\right\|^2\right] \leq  \frac{d^2m n\kappa_{lgc}^2}{4}\gamma^2. \label{lem1varB}
\end{align}
\end{lemma}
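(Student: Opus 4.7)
The plan is to reduce the matrix-norm estimates to row-wise bounds and analyze each row separately, exploiting the identity $\mathbb{E}_u[\nabla r_{s,i}(x)] = \nabla\tilde r_i(x)$ for the mean and the orthonormality of the columns of $U$ for the variance. First I would observe that $\mathbb{E}_u[J^r_S(x)] - J^r(x)$ is a matrix whose $i$-th row is $(\nabla\tilde r_i(x) - \nabla r_i(x))^T$, and bound its spectral norm by its Frobenius norm, giving
$$\|\mathbb{E}_u[J^r_S(x)] - J^r(x)\|\;\le\;\sqrt{\sum_{i=1}^p \|\nabla\tilde r_i(x) - \nabla r_i(x)\|^2}.$$
To control each term I would interchange gradient and expectation in $\tilde r_i(x)=\mathbb{E}_{v\sim U(\mathbb B)}[r_i(x+\gamma v)]$, apply Jensen's inequality together with \eqref{lrird}, and invoke $\mathbb{E}_{v\sim U(\mathbb B)}[\|v\|]=n/(n+1)$ to obtain $\|\nabla\tilde r_i(x)-\nabla r_i(x)\|\le \frac{n\kappa_{lgr}}{n+1}\gamma$. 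Summing over $i$ gives \eqref{lem1A}, and \eqref{lem1B} is proved identically with $\kappa_{lgc}$ in place of $\kappa_{lgr}$.

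For the variance bounds I would again pass to the Frobenius norm and bound each row $\nabla r_{s,i}(x)-\mathbb{E}_u[\nabla r_{s,i}(x)]$ separately. The main idea is to Taylor-expand $r_i(x+\gamma u_j)=r_i(x)+\gamma\nabla r_i(x)^T u_j + e_{i,j}$ with $|e_{i,j}|\le \tfrac{\kappa_{lgr}}{2}\gamma^2$ from the gradient-Lipschitz assumption, and then use the crucial cancellation
$$\nabla r_{s,i}(x) \;=\; \sum_{j=1}^n (\nabla r_i(x)^T u_j)u_j + \frac{1}{\gamma}\sum_{j=1}^n e_{i,j}u_j \;=\; \nabla r_i(x) + \frac{1}{\gamma}\sum_{j=1}^n e_{i,j}u_j,$$
which follows from $UU^T=I$. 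Because $\{u_j\}$ is orthonormal, $\|\sum_j e_{i,j}u_j\|^2=\sum_j e_{i,j}^2$, hence pathwise $\|\nabla r_{s,i}(x)-\nabla r_i(x)\|^2\le \tfrac{n\kappa_{lgr}^2}{4}\gamma^2$. Finally, applying the vector variance identity $\mathbb{E}[\|Y-\mathbb{E}Y\|^2]\le \mathbb{E}[\|Y-c\|^2]$ with the fixed vector $c=\nabla r_i(x)$ transfers this pathwise Taylor estimate into the desired variance bound, and summing over $i$ yields \eqref{lem1varA}; \eqref{lem1varB} is obtained by the same argument applied to $c$.

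The step I expect to be most delicate is the orthonormality cancellation inside the variance argument: the naïve bound for each row $\nabla r_{s,i}(x)$ using only the Lipschitz continuity of $r_i$ scales like $O(1)$ rather than $O(\gamma)$, and one only recovers the correct $\gamma^2$ scaling by using the full Taylor remainder together with the identity $\sum_j u_j u_j^T=I$. The constant $d$ in the statement is meant to absorb the slack between spectral and Frobenius norms and any mild constants picked up along the way; the rest of the argument is routine, and handling $c$ is completely parallel to handling $r$, so the two pairs of inequalities can be established in a unified manner.
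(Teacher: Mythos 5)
Your proof is correct and reaches exactly the same constants as the paper, but via noticeably different mechanics, so a comparison is worthwhile. For the bias bounds the paper simply cites \cite{wang} for the row-wise estimate $\|\mathbb{E}_u[\nabla r_{S,i}(x)]-\nabla r_i(x)\|\le \frac{n\kappa_{lgr}}{n+1}\gamma$, whereas you reprove it from scratch by differentiating under the expectation in \eqref{spsmoo}, applying Jensen with \eqref{lrird}, and using $\mathbb{E}_{v\sim U(\mathbb B)}\|v\|=n/(n+1)$; this is a valid, self-contained derivation (it does rely on the identity $\mathbb{E}_u[\nabla r_{s,i}(x)]=\nabla\tilde r_i(x)$, which the paper establishes in Section 2). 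For the variance bounds the divergence is more substantive: the paper computes $\mathbb{E}\|Y\|^2-\|\mathbb{E}Y\|^2$ term by term, using $\mathbb{E}_u[u_ju_j^T]=\tfrac{1}{n}I$ and watching the first-order cross terms cancel between \eqref{tem1} and \eqref{tem2}, and it invokes a second derivative $r_i''(\xi_i)$ in the Taylor remainder. You instead use the identity $\sum_j u_ju_j^T=I$ to get the pathwise bound $\|\nabla r_{s,i}(x)-\nabla r_i(x)\|^2\le\frac{n\kappa_{lgr}^2}{4}\gamma^2$ from the descent-lemma remainder $|e_{i,j}|\le\tfrac{\kappa_{lgr}}{2}\gamma^2$, and then transfer it to the variance via $\mathbb{E}\|Y-\mathbb{E}Y\|^2\le\mathbb{E}\|Y-c\|^2$. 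Your route is cleaner and arguably more careful: it needs only the Lipschitz continuity of $\nabla r_i$ (no second derivatives), avoids the moment identity $\mathbb{E}_u[u_ju_j^T]=\tfrac{1}{n}I$ entirely, and replaces a delicate cancellation with a deterministic estimate; the paper's computation, on the other hand, yields the exact second moment as an intermediate object, which can be reused elsewhere. Both arguments deliver the identical bounds \eqref{lem1A}--\eqref{lem1varB}.
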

	
\begin{proof}
		By the equivalence of matrix norms, there exists  $d>0$ such that
		\begin{align}
			&\|\mathbb{E}_u [J^r_{S}(x)]-J^r(x)\| \leq d \| \mathbb{E}_u[J^r_{S}(x)]-J^r(x)\|_F, \label{normeq} \\
		&	\left\|J^r_{S}(x)-\mathbb{E}_u 	[J^r_{S}(x)] \right\| \leq d \left\|J^r_{S}(x)-\mathbb{E}_u [J^r_{S}(x)] 	\right\|_F. \label{normeq2}
		\end{align}
		It follows from \cite{wang} that
		\begin{equation*}
			\|\mathbb{E}_u [\nabla r_{S,i}(x)] - \nabla r_i(x)\| \leq
			\frac{n	\kappa_{lgr}}{n+1}\gamma.
		\end{equation*}
Hence,
\begin{align*}
\| \mathbb{E}_u[J^r_{S}(x)]-J^r(x)\|_F^2&=\sum_{i=1}^{p}\|\mathbb{E}_u
[\nabla r_{S,i}(x)] - \nabla r_i(x)\|^2 \leq p\left(\frac{n	\kappa_{lgr}}{n+1}\right)^2 \gamma^2.
		\end{align*}
This, together with (\ref{normeq}), yields (\ref{lem1A}).

		On the other hand, it holds that for $i=1,\ldots,p$,
		\begin{align}\label{Ers}
			\mathbb{E}_u\left[ \left\|\nabla r_{S,i}(x)-\mathbb{E}_u
			[\nabla r_{S,i}(x)] 	\right\|^2\right] =\mathbb{E}_u \left[ \|\nabla r_{S,i}(x)
			\|^2\right] -\|\mathbb{E}_u [\nabla r_{S,i}(x)] \|^2.
		\end{align}
		By the Taylor expansion,
		\begin{align*}
			\frac{r_i(x+\gamma u_j)-r_i(x)}{\gamma} = \nabla
			r_i(x)^T
			u_j+ \frac{1}{2} \gamma u_j^T r''_i(\xi_i)u_j,
		\end{align*}
		where $\xi_i\in [x,x+\gamma u_j]$.
		Denote $\beta_j=u_j^T r''_i(\xi_i) u_j, j=1,\ldots, n$.
By (\ref{lrird}),
		\begin{align}\label{K}
			|\beta_j|\leq \|u_j\|^2\|r''_i(\xi_i) \| \leq \kappa_{lgr}.
		\end{align}
 Since $u_j^Tu_j=1$ and $u_j^Tu_k=0$ for all $j\not=k, j,k=1,\ldots,n$, and
		\begin{equation}\label{uu}
			\mathbb{E}_u [u_j u_j^T]=\frac{1}{n}I,
		\end{equation}
(cf. \cite{wang}),   by (\ref{nabrrd}) and (\ref{K}),
		\begin{align}\label{tem1}
			&\mathbb{E}_u \left[ \| \nabla r_{S,i}(x)	\|^2\right]
			=\mathbb{E}_u \left[\left\| \sum_{j=1}^n \frac{r_i(x+\gamma u_j)-r_i(x)}{\gamma} u_j \right\|^2\right] \nonumber\\
			&= \mathbb{E}_u \left[\left\| \sum_{j=1}^n u_j^T \nabla r_i(x)	u_j+ \frac{\gamma}{2}  \sum_{j=1}^n   \beta_j  u_j
			\right\|^2\right] \nonumber\\
& =\sum_{j=1}^n\mathbb{E}_u \left[ \nabla
			r_i(x)^T u_j u_j^T \nabla r_i(x) \right] + \gamma
			\sum_{j=1}^n  \mathbb{E}_u \left[ \nabla r_i(x)^T u_j \beta_j
			\right]  +
			\frac{\gamma^2}{4}  \sum_{j=1}^n \mathbb{E}_u \left[ \beta_j^2
			\right]
			\nonumber\\
			& \leq \|\nabla	r_i(x)\|^2 +\gamma
			\sum_{j=1}^n  \mathbb{E}_u \left[ \nabla r_i(x)^T u_j \beta_j
			\right]  +\frac{ n \kappa_{lgr}^2}{4}\gamma^2.
		\end{align}

		Now we consider $\|\mathbb{E}_u [\nabla r_{S,i}(x) ] \|^2$. By	\eqref{uu},
		\begin{align}\label{tem2}
			&	\|\mathbb{E}_u [\nabla r_{S,i}(x) ] \|^2 = \left\|
			\mathbb{E}_u\left[\sum_{j=1}^n\frac{r_i(x+\gamma u_j)-r_i(x)}{\gamma} u_j   \right]	\right\|^2  \nonumber \\
			&=\left\| \mathbb{E}_u \left[ \sum_{j=1}^n	\left(u_j^T	\nabla r_i(x)\right) u_j\right] + \frac{\gamma}{2}
			\sum_{j=1}^n\mathbb{E}_u\left[ \beta_j u_j \right] \right\|^2\nonumber\\
			&=\left\|\sum_{j=1}^n \mathbb{E}_u \left[	u_ju_j^T\nabla r_i(x) \right] \right\|^2 + \gamma \left(
			\sum_{j=1}^n\mathbb{E}_u \left[  u_ju_j^T \nabla r_i(x) \right] \right)^T\left(
			\sum_{j=1}^n\mathbb{E}_u 	\left[ \beta_j u_j \right] \right)\nonumber\\
			& \quad + \frac{ \gamma^2}{4}  \left(\sum_{j=1}^n\mathbb{E}_u 	\left[ \beta_j u_j \right]	\right)^T\left(
			\sum_{j=1}^n\mathbb{E}_u 	\left[ \beta_j u_j \right]	\right)\nonumber\\
			& =  \|\nabla	r_i(x)\|^2 +\gamma\sum_{j=1}^n  \mathbb{E}_u \left[ \nabla r_i(x)^T u_j \beta_j\right] +
			\frac{\gamma^2}{4}  \left(\sum_{j=1}^n\mathbb{E}_u 	\left[ \beta_j u_j \right]\right)^T\left(			\sum_{j=1}^n\mathbb{E}_u 	\left[ \beta_j u_j \right] \right).
		\end{align}
		Combining \eqref{Ers},   (\ref{tem1}) and (\ref{tem2}), we obtain
		\begin{align}\label{varr}
			 \mathbb{E}_u\left[ \left\|\nabla r_{S,i}(x)-\mathbb{E}_u
			[\nabla r_{S,i}(x)] 	
			\right\|^2\right]
			& \leq  \frac{ n \kappa_{lgr}^2}{4}\gamma^2.
		\end{align}
		Thus by (\ref{normeq2}) and (\ref{varr}),
		\begin{align*}
			&\mathbb{E}_u\left[ \left\|J^r_{S}(x)-\mathbb{E}_u
			[J^r_{S}(x)] 	
			\right\|^2\right]
			\leq d^2 \mathbb{E}_u\left[ \left\|J^r_{S}(x)-\mathbb{E}_u
			[J^r_{S}(x)] \right\|_F^2\right] \nonumber\\
			& =d^2 \sum_{i=1}^{p} \mathbb{E}_u\left[ \left\|\nabla
			r_{S,i}(x)-\mathbb{E}_u
			[\nabla r_{S,i}(x)] 	
			\right\|^2\right] \leq \frac{ d^2 pn \kappa_{lgr}^2}{4}\gamma^2,
		\end{align*}
		which gives (\ref{lem1varA}).
	
	Similarly, \eqref{lem1B} and (\ref{lem1varB}) can be proved.
	\end{proof}

Next we give the bound for the difference between the gradient of the augmented Lagrangian function
and the expectation of its  approximation in inner iterations,
and the bound for the variance of the approximate gradient as well.

\begin{assumption}\label{ass2rd}
(i) The approximate Jacobian matrices  $J^r_{s_k}, J^c_{s_k}, J^r_{s_{k,j}}, J^c_{s_{k,j}}$ are bounded for all $k$ and $j$.
	
(ii) The function values  $r_k, c_k, r_{k,j}, c_{k,j}$ are bounded for all $k$ and $j$, i.e., there exist 	$\kappa_{br}, \kappa_{bc}>0$ such that
\begin{align}\label{rckbounded}
	\|r(x_k) \|\leq \kappa_{br}, \ \ \|c(x_k)\| \leq \kappa_{bc},\ \
\|r(x_{k,j})  \|\leq \kappa_{br}, \ \  \|c(x_{k,j})\| \leq \kappa_{bc}.
\end{align}

(iii) The multipliers $z_k, y_k$ are bounded, i.e., there exist
$\kappa_{bz}, \kappa_{by}>0$		such that
$$\|z_k\| \leq \kappa_{bz}, \ \
\|y_k\| \leq \kappa_{by}.$$

(iv) $T_{s_k}$ are non-singular for all $k$.

(v) $\epsilon_k\downarrow 0$.

\end{assumption}

Note that the non-singularity of $T_{s_k}$  can be guaranteed by setting appropriate $\rho_k$.	

\begin{lemma}\label{lemphi}
Suppose that Assumptions \ref{ass1rd}(i) and \ref{ass2rd}(ii),(iii)  hold.
Then we have
\begin{align}
&	\| \mathbb{E}_{u}[\nabla\phi_{S_{k,j}}(x_{k,j};\delta )]-\nabla	\phi_{k}(x_{k,j};\delta)\| \nonumber\\
& \leq \left(\frac{d\sqrt{p}n\kappa_{lgr}\kappa_{br}}{n+1}+\frac{d\sqrt{m}n\kappa_{lgc}}{n+1}(\|y_k\|+\delta^{-1} \kappa_{bc})	\right) \gamma_{k,j} := \kappa_{\phi_k,1}\gamma_{k,j},
			\label{phi1bound} \\
&\mathbb{E}_u\left[ \left\|\nabla\phi_{S_{k,j}}(x_{k,j};\delta)-\mathbb{E}_u
			[\nabla	\phi_{S_{k,j}}(x_{k,j};\delta )] 	\right\|^2\right] \nonumber\\
&\leq3\left( \frac{d^2p n\kappa_{lgr}^2\kappa_{br}^2}{4}+
			\frac{d^2m n\kappa_{lgc}^2}{4}(\|y_k\|^2+\delta^{-1}\kappa_{bc}^2)\right) \gamma_{k,j}^2 := \kappa_{\phi_k,2}\gamma_{k,j}^2.
			\label{phi2bound}
\end{align}
	
\end{lemma}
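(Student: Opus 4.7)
The plan is to reduce both bounds to direct applications of Lemma~\ref{lemAB} by decomposing the approximate gradient error along its three natural summands. Recall from \eqref{phik} that
\[
\nabla\phi_{S_{k,j}}(x_{k,j};\delta)-\nabla\phi_k(x_{k,j};\delta)
=(J^r_{S_{k,j}}-J^r(x_{k,j}))^T r(x_{k,j})-(J^c_{S_{k,j}}-J^c(x_{k,j}))^T y_k+\delta^{-1}(J^c_{S_{k,j}}-J^c(x_{k,j}))^T c(x_{k,j}),
\]
and the analogous identity with the true Jacobians replaced by $\mathbb{E}_u[J^r_{S_{k,j}}]$ and $\mathbb{E}_u[J^c_{S_{k,j}}]$ when we center rather than debias. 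The key observation is that conditional on $\mathcal{F}_{k,j-1}$, the vectors $r(x_{k,j})$, $c(x_{k,j})$ and $y_k$ are deterministic, so all randomness sits in the Jacobian approximations, to which the bias and variance estimates \eqref{lem1A}--\eqref{lem1varB} apply directly.

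For \eqref{phi1bound} I would take the expectation of the decomposition above, apply the triangle inequality, and bound each summand by $\|\mathbb{E}_u[J^r_{S_{k,j}}]-J^r\|\cdot\|r(x_{k,j})\|$ and $\|\mathbb{E}_u[J^c_{S_{k,j}}]-J^c\|\cdot(\|y_k\|+\delta^{-1}\|c(x_{k,j})\|)$. Feeding in \eqref{lem1A}, \eqref{lem1B} for the Jacobian bias (with smoothing parameter $\gamma_{k,j}$) and the boundedness of $r$ and $c$ from \eqref{rckbounded} then yields the stated constant $\kappa_{\phi_k,1}$ times $\gamma_{k,j}$.

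For \eqref{phi2bound} I would use the centered decomposition together with the elementary inequality $\|a+b+c\|^2\le 3(\|a\|^2+\|b\|^2+\|c\|^2)$ and the operator bound $\|A^Tv\|^2\le\|A\|^2\|v\|^2$. Taking $\mathbb{E}_u$, the deterministic factors $r(x_{k,j})$, $y_k$, $\delta^{-1}c(x_{k,j})$ pull out of the expectation, so the three terms are bounded respectively by $3\kappa_{br}^2\,\mathbb{E}_u\|J^r_{S_{k,j}}-\mathbb{E}_u[J^r_{S_{k,j}}]\|^2$, $3\|y_k\|^2\,\mathbb{E}_u\|J^c_{S_{k,j}}-\mathbb{E}_u[J^c_{S_{k,j}}]\|^2$, and $3\delta^{-2}\kappa_{bc}^2\,\mathbb{E}_u\|J^c_{S_{k,j}}-\mathbb{E}_u[J^c_{S_{k,j}}]\|^2$. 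Applying \eqref{lem1varA} and \eqref{lem1varB} with $\gamma=\gamma_{k,j}$ gives the desired constant $\kappa_{\phi_k,2}$; matching the stated form is then a matter of factoring the common $\gamma_{k,j}^2$.

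The estimates are fundamentally linearity-of-expectation arguments, so there is no serious analytic obstacle; the main care is bookkeeping. In particular, one must be explicit that $\mathbb{E}_u$ denotes conditional expectation over the Stiefel sample at the current inner iteration, so that $r(x_{k,j})$, $c(x_{k,j})$ and $y_k$ can legitimately be treated as deterministic constants, and one must be careful to group the $\delta^{-1}$ factors correctly with the $c$-term so that the final constant matches the form displayed in \eqref{phi2bound}.
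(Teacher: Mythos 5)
Your proposal is correct and follows essentially the same route as the paper's proof: decompose the gradient error into the three Jacobian-error terms, bound the bias via the triangle inequality together with \eqref{lem1A}--\eqref{lem1B} and the bounds \eqref{rckbounded}, and bound the variance via $\|a+b+c\|^2\le 3(\|a\|^2+\|b\|^2+\|c\|^2)$ together with \eqref{lem1varA}--\eqref{lem1varB}. One minor remark: your computation (like the paper's own derivation) produces $\delta^{-2}\kappa_{bc}^2$ in the variance constant, so the $\delta^{-1}\kappa_{bc}^2$ appearing in the displayed bound \eqref{phi2bound} is evidently a typo in the statement rather than a defect in your argument.
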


\begin{proof}
By the definition of $\phi_k(x;\delta)$,
		\begin{align*}
			\nabla\phi_{S_{k,j}}(x_{k,j};\delta)&
			=(J^r_{S_{k,j}})^Tr(x_{k,j})-(J^c_{S_{k,j}})^Ty_k+\delta^{-1}
			(J^c_{S_{k,j}})^Tc(x_{k,j}),\\
			\nabla\phi_{k}(x_{k,j};\delta)&=
			(J^r_{k,j})^Tr(x_{k,j})-(J^c_{k,j})^Ty_k+\delta^{-1}(J^c_{k,j})^T
			c(x_{k,j}).
		\end{align*}
By Assumption \ref{ass2rd}(ii)--(iii)  and Lemma \ref{lemAB},
		\begin{align*}
			&	\| \mathbb{E}_u[\nabla	\phi_{k,S_{k,j}}(x_{k,j};\delta)]-\nabla	\phi_{k}(x_{k,j};\delta)\| \\
			& = \left \|\left(  \mathbb{E}_u [J^r_{S_{k,j}}]-J^r_{k,j}\right)
			^Tr(x_{k,j})-\left(  \mathbb{E}_u [J^c_{S_{k,j}}]-J^c_{k,j}\right)^Ty_k+\delta^{-1}
			\left(  \mathbb{E}_u [J^c_{S_{k,j}}]-J^c_{k,j}\right)^Tc(x_{k,j})\right\|\\
			& \leq \left\| \mathbb{E}_u [J^r_{S_{k,j}}]-J^r_{k,j}\right\|\|r(x_{k,j})\|+
			\left\|\mathbb{E}_u [J^c_{S_{k,j}}]-J^c_{k,j}\right\|\|y_k\|+ \delta^{-1}
			\left\|\mathbb{E}_u [J^c_{S_{k,j}}]-J^c_{k,j}\right\|\|c(x_{k,j})\| \\
			&\leq \left(\frac{d\sqrt{p}n\kappa_{lgr}\kappa_{br}}{n+1}+\frac{d\sqrt{m}n\kappa_{lgc}}{n+1}(\|y_k\|+\delta^{-1} \kappa_{bc})\right)		\gamma_{k,j}.
		\end{align*}
Similarly, we have
		\begin{align*}
			&	\mathbb{E}_u\left[ \left\|\nabla
			\phi_{S_{k,j}}(x_{k,j};\delta)-\mathbb{E}_u
			[\nabla
			\phi_{S_{k,j}}(x_{k,j};\delta)] 	
			\right\|^2\right]\\
			&=\mathbb{E}_u\left[ \left\| 	\left( J^r_{S_{k,j}}-\mathbb{E}_u[
			J^r_{S_{k,j}}] \right) ^Tr(x_{k,j})-\left(
			J^c_{S_{k,j}}-\mathbb{E}_u[J^c_{S_{k,j}}]\right)
			^Ty_k+\delta^{-1}
			\left( J^c_{S_{k,j}}-\mathbb{E}_u[J^c_{S_{k,j}}]\right)^Tc(x_{k,j})
			\right\|^2\right]\\
			&\leq 3\mathbb{E}_u\left[\left\|  J^r_{S_{k,j}}-\mathbb{E}_u[
			J^r_{S_{k,j}}] \right\|^2 \kappa_{br}^2 +\left\|
			J^c_{S_{k,j}}-\mathbb{E}_u[
			J^c_{S_{k,j}}] \right\|^2 \|y_k\|^2+ \delta^{-2}\left\|
			J^c_{S_{k,j}}-\mathbb{E}_u[
			J^c_{S_{k,j}}] \right\|^2 \kappa_{bc}^2  \right]\\
			&\leq 3\left( \frac{d^2p n\kappa_{lgr}^2\kappa_{br}^2}{4}+
			\frac{d^2m n\kappa_{lgc}^2}{4}(\|y_k\|^2+\delta^{-2}\kappa_{bc}^2)\right) \gamma_{k,j}^2.
		\end{align*}	
The proof is completed.	
	\end{proof}

Based on Lemma \ref{lemphi}, we investigate the properties of Algorithm \ref{alg2} if it is called by Algorithm \ref{alg1}.
We discuss two cases: $\Delta_{k,j}$ decrease finitely many times or infinitely many times.
As Algorithm \ref{alg2} is similar to the derivative-free LM algorithm
\cite[Algorithm 1]{cx} for unconstrained nonlinear least squares problems
in the former case, we apply some of its results directly.		
		
\begin{lemma}\label{lembeforthem}
Suppose that Assumptions \ref{ass1rd} and \ref{ass2rd}(i)-(iii) hold for every realization of  Algorithm \ref{alg2} with fix $k$.
If $\Delta_{k,j}$ decrease finitely many times, then
\begin{align}
				\|F_{S_{k,j+1}}\|_* \leq \theta \|F_{s_k}\|_*+\epsilon_k
\end{align}
		holds for some $j$ with any probability $0<p<1$.
\end{lemma}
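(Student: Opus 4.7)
The plan is to exploit the fact that, once $\delta$ stabilizes, the inner iteration of Algorithm \ref{alg2} reduces to the unconstrained derivative-free Levenberg--Marquardt algorithm of \cite[Algorithm 1]{cx}, and then translate its (probabilistic) convergence into the merit-function condition checked in line 10. By hypothesis there exists $j_0$ with $\delta_{k,j}=\bar{\delta}$ for all $j\ge j_0$. For such $j$ the guard in line 8 never fires, the radius update in line 6 collapses to $\gamma_{k,j+1}=\min\{\gamma_{k,j}/2,\|d_{x_{k,j}}\|\}$, and the remaining steps of Algorithm \ref{alg2} coincide verbatim with those of \cite[Algorithm 1]{cx} applied to the unconstrained nonlinear least-squares problem $\min_x \frac{1}{2}\|\Phi_k(x;\bar{\delta})\|^2$. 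The probabilistically first-order accurate gradient-model hypothesis required by \cite{cx} is delivered by Lemma \ref{lemphi} with $\delta=\bar{\delta}$.

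By the global convergence result of \cite[Algorithm 1]{cx}, for any prescribed $p\in(0,1)$ and any $\eta>0$ one can ensure, with probability at least $p$, the existence of a (random) index $j^{\star}\ge j_0$ at which $\|\nabla\phi_{s_{k,j^{\star}+1}}(x_{k,j^{\star}+1};\bar{\delta})\|\le\eta$. Line 5 enforces $z_{k,j+1}=r(x_{k,j+1})$ and line 9 gives $y_{k,j+1}=y_k-\bar{\delta}^{-1}c(x_{k,j+1})$, so direct substitution yields
\begin{align*}
(J^r_{s_{k,j+1}})^T z_{k,j+1}-(J^c_{s_{k,j+1}})^T y_{k,j+1}=\nabla\phi_{s_{k,j+1}}(x_{k,j+1};\bar{\delta}),
\end{align*}
whence $\|F_{S_{k,j^{\star}+1}}\|_{*}=\|\nabla\phi_{s_{k,j^{\star}+1}}\|+\|c(x_{k,j^{\star}+1})\|$. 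Since $\delta$ does not decrease at $j^{\star}$, the conjunction guarding line 8 must fail there; choosing $\eta$ small enough so that the gradient clause $\|\nabla_x L_{s_{k,j^{\star}+1}}\|\le\theta\|\nabla_x L_{s_k}(x_k,y_k)\|+\frac{1}{2}\epsilon_k$ holds forces failure of the constraint clause, i.e., $\|c(x_{k,j^{\star}+1})\|\le\theta\|c(x_k)\|+\frac{1}{2}\epsilon_k$. A short triangle inequality, using boundedness of $J^r_{s_k}$ from Assumption \ref{ass2rd}(i), relates $\|\nabla_x L_{s_k}(x_k,y_k)\|+\|c(x_k)\|$ to $\|F_{s_k}\|_{*}$ up to a bounded multiple of $\|z_k-r(x_k)\|$; sharpening $\eta$ by that bounded factor absorbs the mismatch, and summing the two decrease estimates yields $\|F_{S_{k,j^{\star}+1}}\|_{*}\le\theta\|F_{s_k}\|_{*}+\epsilon_k$.

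The main obstacle is the probabilistic step: transferring the convergence guarantee of \cite{cx} into the present setting requires verifying that conditioning on the event ``$\delta$ stabilizes at $\bar{\delta}$'' preserves the independence and martingale structure on which the LM convergence analysis of \cite{cx} rests, and that the $\gamma_{k,j}$ generated by the truncated update of line 6 still decays fast enough for the gradient models to be probabilistically first-order accurate with the prescribed confidence $p$. A secondary but minor technical point is the mismatch between $\|F_{s_k}\|_{*}$ and $\|\nabla_x L_{s_k}(x_k,y_k)\|+\|c(x_k)\|$ introduced by the term $\|z_k-r(x_k)\|$ in the merit function; Assumption \ref{ass2rd}(i) guarantees that the mismatch is controllable and can be absorbed into the tolerance $\eta$ without affecting the conclusion.
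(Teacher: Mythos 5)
Your proposal follows essentially the same route as the paper's proof: once $\delta_{k,j}$ stabilizes, reduce to the unconstrained derivative-free LM analysis of \cite{cx} for $\min_x\frac{1}{2}\|\Phi_k(x;\bar\delta)\|^2$, drive the gradient of $\phi_k$ below the line-8 threshold, and use the failure of the line-8 conjunction (which cannot fire for $j\ge j_0$) to extract the constraint decrease; the term $\|z_{k,j+1}-r(x_{k,j+1})\|$ vanishes by line 5, and the extra nonnegative term $\|z_k-r(x_k)\|$ in $\|F_{s_k}\|_*$ only makes the right-hand side larger, so the ``mismatch'' you worry about in your last paragraph needs no absorption into $\eta$. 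The one step you flag as the main obstacle is closed in the paper not by re-examining the martingale structure of \cite{cx} under conditioning, but by combining two separate facts: (a) the almost-sure statement $\liminf_{j}\|\nabla\phi_k(X_{k,j};\Delta_{k,j_0})\|=0$ for the \emph{true} gradient from \cite[Theorem 4.6]{cx}, together with the deterministic decay $\gamma_{k,j}\le\|d_{x_{k,j-1}}\|\le 1/\lambda_{k,j-1}\to 0$ (since $\lambda_{k,j}\to\infty$ by \cite[Lemma 4.3]{cx}); and (b) a single application of Chebyshev's inequality to the variance bound of Lemma \ref{lemphi}, showing that at any one fixed index the approximate gradient lies within $\bigl(\kappa_{\phi_k,1}+\sqrt{\kappa_{\phi_k,2}/(1-p)}\bigr)\gamma_{k,j}$ of the true one with probability at least $p$. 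Choosing $j_1$ at which both the true gradient and $\gamma_{k,j}$ are small enough then yields the line-8 gradient clause with probability at least $p$ at that single index, which is all the lemma asserts, so no uniform-over-$j$ probabilistic control is needed.
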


\begin{proof}
Denote
$$
\mathcal E_1=\{\Delta_{k,j} \ \text{decrease finitely many times}\}.
$$
If $\mathcal E_1$ happens in  Algorithm \ref{alg2} for some $k$,
then there exists $j_0$ such that $\Delta_{k,j}=\Delta_{k,j_0}$ for all $j \ge
j_0$.
So Algorithm \ref{alg2} is similar to \cite[Algorithm 1]{cx}, except the updating rule of the smoothing parameter.
In this paper, we take $\gamma_{k,j+1}=\min\{\frac{1}{2}\gamma_{k,j}, \|d_{x_{k,j}}\|\}$ for all $j\ge j_0$.
Under  Assumptions \ref{ass1rd} and \ref{ass2rd}(i),
according to the proof of  \cite[Theorem 3.2]{cx} and \cite[Remark 3.4]{cx},
$\{\nabla \phi_{S_{k,j}}(X_{k,j};\Delta_{k,j_0})\}_{j\ge j_0}$ is $\frac{1}{2}$-probabilistically
			first-order 	accurate  as defined in \cite[Definition 3.1]{cx}.
Following \cite[Theorem 4.6]{cx}, we have
\begin{equation}\label{limconverg}
\mathbb P\left( 	\liminf_{j \to \infty}\|\nabla\phi_{k}(X_{k,j};\Delta_{k,j_0})\|=0 \ \Big |\ \mathcal E_1 \right)	=1.
\end{equation}  		
Moreover, it follows from \cite[Lemma 4.3]{cx} that $\lim\limits_{j \to \infty}\lambda_{k,j}=+ \infty$. This, together with $\gamma_{k,j} \leq \|d_{x_{k,j-1}}\|\leq \frac{1}{\lambda_{j-1}}$, implies that
\begin{align}\label{gamma}
\lim_{j \to \infty}\gamma_{k,j}=0
\end{align} for any	realization  of Algorithm \ref{alg2} if $\mathcal E_1$ happens.
			
On the other hand, by Lemma \ref{lemphi} and multivariate Chebyshev's inequality 		\cite{Chebyshevproof}, for any $s>0$ and $j\ge j_0$,
\begin{align}\label{Chev}
\mathbb{P}\left(\left\|\nabla \phi_{S_{k,j}}(x_{k,j};\delta_{k,j_0})-\mathbb{E}_u
				[\nabla \phi_{S_{k,j}}(x_{k,j};\delta_{k,j_0})] 	
				\right\|\leq s\right)\ge 1- \frac{\kappa_{\phi_k,2}\gamma_{k,j}^2}{s^2}.
\end{align}
For any $0<p<1$, let $s=\sqrt{\frac{ \kappa_{\phi_k,2}}{1-p}}	\gamma_{k,j}$.
Then  by (\ref{phi1bound}) and \eqref{Chev},
\begin{align}\label{pgrad}
&	\mathbb{P}\left[	\left\|\nabla\phi_{S_{k,j}}(X_{k,j};\Delta_{k,j_0})-\nabla
\phi_{k}(X_{k,j};\Delta_{k,j_0})\right\|\leq \left(\kappa_{\phi_k,1}+ \sqrt{\frac{
\kappa_{\phi_k,2}}{1-p}}\right)\Gamma_{k,j} \ \bigg |\   \mathcal{F}_{k,j}, \mathcal E_1 \right] \nonumber\\
&= 	\mathbb{P}\left[	\left\|\nabla	\phi_{S_{k,j}}(x_{k,j};\delta_{k,j_0})-\nabla\phi_{k}(x_{k,j};\delta_{k,j_0})
	\right\|\leq \left(\kappa_{\phi_k,1}+ \sqrt{\frac{	\kappa_{\phi_k,2}}{1-p}}
				\right)\gamma_{k,j}  \right] \nonumber\\
&\geq \mathbb{P}\left(\left\|\nabla \phi_{S_{k,j}}(x_{k,j};\delta_{k,j_0})-\mathbb{E}_u
				[\nabla \phi_{S_{k,j}}(x_{k,j};\delta_{k,j_0})] 	
				\right\|\leq \sqrt{\frac{ \kappa_{\phi_k,2}}{1-p}}	\gamma_{k,j}\right)\nonumber\\
&\ge p
\end{align}
for all $j\ge j_0$.

It follows from (\ref{limconverg}) and \eqref{gamma} that  there exists $j_1\ge j_0$  such that
\begin{align}\label{t6}
\|\nabla \phi_{k}(x_{k,j_1+1};\delta_{k,j_0})\|\leq \theta\|\nabla_x L_{s_{k}}
				(x_k,y_k) \|+ \frac{1}{2}\epsilon_k- \left(\kappa_{\phi_k,1}+
				\sqrt{\frac{\kappa_{\phi_k,2}}{1-p}}\right)\gamma_{k,j}.
\end{align}
Thus by \eqref{pgrad} and (\ref{t6}),
\begin{align}\label{t7}
\left\|\nabla\phi_{S_{k,j_1+1}}(x_{k,j_1+1};\delta_{k,j_0})\right\|\leq
				\theta\|\nabla_x L_{s_{k}}		(x_k,y_k) \|+ \frac{1}{2}\epsilon_k
\end{align}
holds with probability at least $p$.
			
Note that
$$
\nabla\phi_{s_{k,j+1}}(x_{k,j+1};\delta_{k,j})=\nabla_xL_{s_{k,j+1}}
(x_{k,j+1},y_k-\delta_{k,j}^{-1}c(x_{k,j+1}))
$$
and $\delta_{k,j}$ does not decrease for $j\geq j_0$.
By  the line 8 of Algorithm \ref{alg2}, if \eqref{t7} happens, it holds
\begin{align}
\|c(x_{k,j_1+1})\|\leq \theta	\|c(x_{k})\|+\frac{1}{2}\epsilon_k.
\end{align}
Since $z_{k,j_1+1}=r(x_{k,j_1+1})$, $y_{k,j_1+1}= y_k-\delta_{k,j_0}^{-1}c(x_{k,j_1+1})$ due to line 9 of Algorithm \ref{alg2}, by (\ref{t7}),  we have
			\begin{align*}
				\|F_{S_{k,j_1+1}}\|_*&=\left\|\nabla
				\phi_{k,S_{k,j_1+1}}(x_{k,j_1+1};\delta)\right\| +
				\|z_{k,j_1+1}-r(x_{k,j_1+1})\| + \|c(x_{k,j_1+1})\| \\
				&\leq \theta \left( \|\nabla_x 	L_{s_{k}}(x_k,y_k) \| +
				\|z_{k}-r(x_{k})\| + \|c(x_{k})\|   \right)  + \epsilon_k  \\
				&=\theta \|F_{s_k}\|_*+\epsilon_k
			\end{align*}
			holds with   probability at least  $p$.			
		\end{proof}

Now we consider the case that $\Delta_{k,j}$ decrease infinitely many times.
It is shown that Algorithm \ref{alg2} converges to a stationary point of the merit function of the constraints with probability one. It happens in numerical experiments.

\begin{lemma}\label{lemBC}
 Suppose that Assumptions \ref{ass1rd}(i) and \ref{ass2rd}(i)-(iii)  hold for each realization of 	Algorithm \ref{alg2} with fix $k$.
 If $\Delta_{k,j}$ decrease infinitely many	times, then the random sequence $\{X_{k,j}\}$
generated  by Algorithm \ref{alg2} 	satisfies
\begin{align}\label{feasible}
		\liminf_{j\to \infty}\|J^c(X_{k,j})^T c(X_{k,j})\|=0
\end{align}
almost surely.
\end{lemma}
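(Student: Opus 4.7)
The plan is to split the argument into three stages. In the first stage I extract from the decrease condition in line 8 of Algorithm \ref{alg2} a deterministic bound on the approximate feasibility residual $\|(J^c_{S_{k,j}})^T c(X_{k,j})\|$ along the $\Delta$-decrease subsequence. Set $J := \{j : \Delta_{k,j+1} < \Delta_{k,j}\}$, so that $|J| = \infty$ and $\Delta_{k,j} \to 0$ on the event $\mathcal E_2$. For $j \in J$, line 8 enforces
\begin{equation*}
\|\nabla_x L_{S_{k,j+1}}(X_{k,j+1}, y_k - \Delta_{k,j}^{-1} c(X_{k,j+1}))\| \le \theta \|\nabla_x L_{s_k}(x_k,y_k)\| + \tfrac{1}{2}\epsilon_k,
\end{equation*}
and, recognizing the left-hand side as $\nabla \phi_{k,S}(X_{k,j+1}; \Delta_{k,j})$ via \eqref{phik}, isolating the $\Delta_{k,j}^{-1}$ term and applying Assumption \ref{ass2rd}(i)--(iii) yields a uniform constant $C$ with $\|(J^c_{S_{k,j+1}})^T c(X_{k,j+1})\| \le C \Delta_{k,j}$. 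In particular, this residual tends to zero along the random subsequence $\{j+1 : j \in J\}$ in every realization in $\mathcal E_2$.

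In the second stage I verify that $\Gamma_{k,j} \to 0$ pointwise on $\mathcal E_2$. When $j-1 \notin J$, rule (a) in line 6 gives $\Gamma_{k,j+1} \le \Gamma_{k,j}/2$. When $j-1 \in J$, rule \eqref{gamaupd} applies: case iii again halves, while cases i and ii force $\Gamma_{k,j} \le p_3^{-1}\|(J^c_{S_{k,j}})^T c(X_{k,j})\| \le C\Delta_{k,j-1}/p_3$, so even after the capped doubling $\Gamma_{k,j+1} \le 2C\Delta_{k,j-1}/p_3$. Since $\Delta_{k,j-1} \to 0$ in every realization in $\mathcal E_2$, combining the three cases preserves the shrinkage of the upper envelope and gives $\Gamma_{k,j} \to 0$ pointwise on $\mathcal E_2$.

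In the third stage I transfer the bound to the exact Jacobian $J^c$ by a conditional Chebyshev--Borel--Cantelli argument. Enumerate $J = \{j_1 < j_2 < \cdots\}$ and set $\sigma_\ell := j_\ell + 1$; along $\{\sigma_\ell\}$, both $\|(J^c_{S_{k,\sigma_\ell}})^T c(X_{k,\sigma_\ell})\| \to 0$ (Stage 1) and $\Gamma_{k,\sigma_\ell} \to 0$ (Stage 2). Pass to a subsubsequence $\{\sigma_{\ell_i}\}$ with $\Gamma_{k,\sigma_{\ell_i}} \le 2^{-i}$, which are stopping times for $\{\mathcal F_{k,j}\}$. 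By Lemma \ref{lemAB} conditional on $\mathcal F_{k,\sigma_{\ell_i}}$ and the multivariate Chebyshev inequality, for every fixed $\eta > 0$ and all sufficiently large $i$ (so that the bias $\tfrac{d\sqrt m n\kappa_{lgc}}{n+1}\Gamma_{k,\sigma_{\ell_i}}$ is below $\eta/2$),
\begin{equation*}
\mathbb P\bigl(\|J^c_{S_{k,\sigma_{\ell_i}}} - J^c(X_{k,\sigma_{\ell_i}})\| > \eta \;\big|\; \mathcal F_{k,\sigma_{\ell_i}}\bigr) \le \tfrac{d^2 m n\kappa_{lgc}^2}{\eta^2}\,\Gamma_{k,\sigma_{\ell_i}}^2 \le C'\,4^{-i}/\eta^2.
\end{equation*}
Summability in $i$ and the conditional Borel--Cantelli lemma give $\|J^c_{S_{k,\sigma_{\ell_i}}} - J^c(X_{k,\sigma_{\ell_i}})\| \to 0$ almost surely on $\mathcal E_2$; combining with Stage 1 through the triangle inequality $\|J^c(X_{k,\sigma_{\ell_i}})^T c(X_{k,\sigma_{\ell_i}})\| \le \|(J^c_{S_{k,\sigma_{\ell_i}}})^T c(X_{k,\sigma_{\ell_i}})\| + \kappa_{bc}\|J^c_{S_{k,\sigma_{\ell_i}}} - J^c(X_{k,\sigma_{\ell_i}})\|$ then yields $\liminf_{j\to\infty}\|J^c(X_{k,j})^T c(X_{k,j})\| = 0$ almost surely on $\mathcal E_2$.

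The main obstacle will be Stage 2: the interplay between rule (a) and the three branches of \eqref{gamaupd} is combinatorially delicate, and one must rule out a rebound of $\Gamma_{k,j}$ to any fixed positive level via case i of \eqref{gamaupd} as $\Delta_{k,j}$ shrinks. A secondary subtlety is Stage 3's use of conditional Borel--Cantelli along a random subsequence $\{\sigma_{\ell_i}\}$; the conditional variance bound of Lemma \ref{lemAB} makes the probability estimate uniform across sample paths and is what keeps the sum in $i$ finite.
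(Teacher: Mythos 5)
Your Stage 1 coincides with the paper's opening step (the line-8 test forces $\delta_{k,j-1}^{-1}\|(J^c_{s_{k,j}})^Tc(x_{k,j})\|$ to stay bounded, hence the approximate residual is $O(\Delta)$ along the decrease subsequence). After that you diverge in a genuinely different and, in places, cleaner direction. For $\Gamma_{k,j}\to 0$ the paper runs a purely combinatorial contradiction argument on the branches of \eqref{gamaupd}; you instead exploit the quantitative bound $\|(J^c_{S_{k,j}})^Tc(X_{k,j})\|\le C\Delta_{k,j-1}$ to show that every non-halving branch of \eqref{gamaupd} pins $\Gamma_{k,j}$ beneath a multiple of $\Delta_{k,j-1}$, giving the recursion $\Gamma_{k,j+1}\le\max\{\tfrac12\Gamma_{k,j},\,2p_3^{-1}C\Delta_{k,j-1}\}$ and hence convergence deterministically. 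This is correct in substance, though you have permuted the branch labels of \eqref{gamaupd}: case i (residual $<p_3\gamma$) is the halving branch and gives no upper bound on $\Gamma_{k,j}$, while cases ii and iii are the ones satisfying $\Gamma_{k,j}\le p_3^{-1}\|(J^c_{S_{k,j}})^Tc(X_{k,j})\|$; the argument survives relabeling. For the transfer to the exact Jacobian, the paper never proves $\|J^c_{S_{k,j}}-J^c(X_{k,j})\|\to 0$; it assumes the conclusion fails, shows the event ``the approximation is $\hat\kappa\Gamma_{k,j}$-accurate'' has conditional probability at least $\tfrac12$ and forces a doubling of $\Gamma$, and derives a contradiction with $\Gamma_{k,j}\to 0$ from $\limsup_l W^R_l=+\infty$ for the associated submartingale. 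Your Chebyshev-plus-Borel--Cantelli route is more direct and yields a stronger intermediate statement.

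The one genuine gap is the measurability claim underpinning Stage 3. You assert that the indices $\sigma_{\ell_i}$ are stopping times for $\{\mathcal F_{k,j}\}$ and then apply the conditional Chebyshev bound at these random times. Under the paper's convention the sample $S_{k,j}$ is \emph{not} $\mathcal F_{k,j}$-measurable (this is exactly what makes $\mathbb E_u[J^c_{S_{k,j}}(x_{k,j})]=\mathbb E[J^c_{S_{k,j}}(X_{k,j})\mid\mathcal F_{k,j}]$ meaningful), whereas the event $\{j-1\in J\}$, i.e.\ that line 8 triggered a decrease at inner iteration $j-1$, is decided using $J^r_{s_{k,j}},J^c_{s_{k,j}}$ --- the sample drawn at index $j$. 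Hence $\{\sigma_\ell=j\}\notin\mathcal F_{k,j}$, conditioning on membership in your subsequence biases the law of $J^c_{S_{k,j}}$, and the bound $\mathbb P(B_i\mid\mathcal F_{k,\sigma_{\ell_i}})\le C'4^{-i}/\eta^2$ does not follow from Lemma \ref{lemAB}. This is precisely the trap the paper's construction avoids by defining $V_j$ for every $j$ and conditioning only on $\mathcal F_{k,j}$. The fix is available with your own tools: your Stage 2 recursion, together with the geometric halving between decrease steps and $\Delta_{k,j_\ell}\le 10^{-\ell}\delta_k$, gives a deterministic bound on $\sum_j\Gamma_{k,j}^2$; applying the (conditional) Borel--Cantelli lemma to the events $B_j=\{\|J^c_{S_{k,j}}-\mathbb E_u[J^c_{S_{k,j}}]\|>\eta\}$ over \emph{all} $j$, with $\sum_j\mathbb P(B_j\mid\mathcal F_{k,j})\le c\eta^{-2}\sum_j\Gamma_{k,j}^2<\infty$, yields $\|J^c_{S_{k,j}}-J^c(X_{k,j})\|\to 0$ almost surely along the whole sequence, after which your triangle inequality closes the proof without any random subsequence.
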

		
\begin{proof}
Denote
$$
\mathcal E_2=\{\Delta_{k,j}\  \text{decrease infinitely many times}\}.
$$
If $\mathcal E_2 $  happens  for the realization of  Algorithm \ref{alg2} for some $k$,
then the set
$$
\mathcal{J}:= \{ j\in \mathbb{N}: \delta_{k,j}<\delta_{k,j-1} \}
$$
is infinite. Due to line 8 of Algorithm \ref{alg2}, for all $j\in\mathcal{J}$,
\begin{align}\label{tb1}
&\|(J^r_{s_{k,j}})^Tr(x_{k,j})-(J^c_{s_{k,j}})^Ty_k	+\delta_{k,j-1}^{-1}(J^c_{s_{k,j}})^Tc(x_{k,j})\|\nonumber\\
&=\|\nabla_xL_{s_{k,j}}(x_{k,j},y_k-\delta_{k,j-1}^{-1}c(x_{k,j}))  \|\nonumber\\
&\leq \theta	\|\nabla_x L_{s_{k}} (x_k,y_k) \|+	\frac{1}{2}\epsilon_k,
\end{align}
and	$\| c(x_{k,j}) \| > \theta\|c(x_{k})\|	+\frac{1}{2}\epsilon_k>0 $.
It then follows from Assumptions 
\ref{ass2rd}(i)-(iii),
\eqref{tb1} and $\delta_{k,j} \ge \frac{1}{10}\delta_{k,j-1}$  that for all
$j\in\mathcal{J}$,
\begin{align*}
\frac{1}{10}\delta_{k,j}^{-1} \|(J^c_{s_{k,j}})^Tc(x_{k,j})\|
&\leq \delta_{k,j-1}^{-1}\|(J^c_{s_{k,j}})^Tc(x_{k,j})\|\nonumber\\
&\leq\|(J^r_{s_{k,j}})^Tr(x_{k,j})-(J^c_{s_{k,j}})^Ty_k\|
				+\theta\|\nabla_x L_{s_{k}} (x_k,y_k) \|+\frac{1}{2}\epsilon_k\nonumber\\
&<+\infty.
\end{align*}
Since $\delta_{k,j} = \delta_{k,j-1}$  for all $j\notin\mathcal{J}$,	we have
 $ \{(J^c_{s_{k,j}})^Tc(x_{k,j})\}_{j\in \mathcal{J}}	\to	0$.
			
Next we prove $\{\gamma_{k,j}\}_{j\in \mathcal{J}} \to 0$.
Otherwise there exists  $\nu>0$ such that the set $ \mathcal{J}_1=\{j\in \mathcal{J}:\gamma_{k,j}>\nu\}$ is
infinite. Hence the set $ \mathcal{J}_2=\{j\in \mathcal{J}:\gamma_{k,j}>\frac{\nu}{2}\}$ is also infinite.
Define $ \mathcal{J}_3=\{j\in \mathcal{J}_2:\gamma_{k,j+1}>\gamma_{k,j}\}$.
We show $\mathcal{J}_3$ is also infinite.
Otherwise,  there exists $j_0\in \mathcal{J}_2 $ such that $\gamma_{k,j+1}\leq \gamma_{k,j}$ for all $j\ge j_0$ in $\mathcal{J}_2$.
Then there exists $j_1\in\mathcal{J}\setminus \mathcal{J}_2, j_1>j_0$,
such that $\gamma_{k,j_1} \leq \frac{\nu}{2}$ and $\gamma_{k,j+1}\leq \gamma_{k,j}$ for all $j\in [j_0,j_1)\cap \mathcal{J}_2$.
Since $\mathcal{J}_2$ is infinite, by the updating rule	(\ref{gamaupd}), there exists $j_2\in\mathcal{J}_2, j_2>j_1$ such that $\frac{\nu}{2}<\gamma_{k,j_2}\leq \nu$ and $\gamma_{k,j}\leq \frac{\nu}{2}$
for all $j \in [j_1,j_2)\cap \mathcal{J}\setminus \mathcal{J}_2$.
By induction, it can be deduced  that $\gamma_{k,j} \leq \nu$ for all $j\in (j_2,+\infty)\cap \mathcal{J}$, which contradicts the infiniteness of $\mathcal{J}_1$. Thus, $\mathcal{J}_3$ is infinite.
Since $ \{(J^c_{s_{k,j}})^Tc(x_{k,j})\}_{j\in \mathcal{J}} \to 0$, there exists
$j_{\nu}\in\mathcal{J}_3\subset \mathcal{J}_2$, such that
\begin{align*}
\|(J^c_{s_{k,j_{\nu}}})^Tc(x_{k,j_{\nu}})\| < \frac{p_3\nu}{2}< p_3 \gamma_{k,j_{\nu}}, \quad					\gamma_{k,j_{\nu}+1}>\gamma_{k,j_{\nu}}.
\end{align*}
This contradicts (\ref{gamaupd}).
Thus, $\{\gamma_{k,j}\}_{j\in \mathcal{J}} \to 0$.
Besides, by Algorithm \ref{alg2}, $\gamma_{k,j+1}\leq\frac{1}{2}\gamma_{k,j}$ for $j\notin \mathcal{J}$,
so we have
\begin{align}\label{gammato}
\{\gamma_{k,j}\}_{j} \to 0.
\end{align}

By Lemma \ref{lemAB} and multivariate Chebyshev's inequality \cite{Chebyshevproof}, for any $s>0$,
\begin{align}\label{temp}
\mathbb{P}\left(	\left\|	J^c_{S_{k,j}}(x_{k,j})-\mathbb{E}_u [J^c_{S_{k,j}}(x_{k,j})] 	
				\right\|\leq s\right)\ge 1- \frac{d^2m n\kappa_{lgc}^2}{4s^2}\gamma_{k,j}^2.
\end{align}
By Assumption \ref{ass2rd}(ii) and (\ref{lem1B}),
\begin{align}\label{temp1}
 &	\left\| J^c_{S_{k,j}}(x_{k,j})^Tc(x_{k,j})-J^c(x_{k,j})^Tc(x_{k,j}) \right\| \leq
			 	\left\| J^c_{S_{k,j}}(x_{k,j})-J^c(x_{k,j})  \right\|\left\|  c(x_{k,j}) \right\|\nonumber\\
			 &\leq \left( \left\|  J^c_{S_{k,j}}(x_{k,j})-\mathbb{E}_u
			 [J^c_{S_{k,j}}(x_{k,j})]\right\|+\left\|\mathbb{E}_u
			 [J^c_{S_{k,j}}(x_{k,j})]- J^c(x_{k,j})\right\|  \right) \left\| c(x_{k,j}) \right\|\nonumber\\
			 & \leq \kappa_{bc}\left( \left\| J^c_{S_{k,j}}(x_{k,j})-\mathbb{E}_u [J^c_{S_{k,j}}(x_{k,j})]\right\|+d\sqrt{m}\kappa_{lgc}\gamma_{k,j} \right) .
\end{align}
Let $s=\sqrt{\frac{d^2m n\kappa_{lgc}^2}{2}} \gamma_{k,j}$ in (\ref{temp}).
Combining (\ref{temp}), (\ref{temp1}) and $\mathbb{E}_{u}[J^c_{S_{k,j}}(x_{k,j})]=			\mathbb{E}[J^c_{S_{k,j}}(X_{k,j})\mid \mathcal{F}_{k,j}]$, we obtain
\begin{align*}
&	\mathbb{P}\left(	\left\|
J^c_{S_{k,j}}(X_{k,j})^Tc(X_{k,j})-J^c(X_{k,j})^Tc(X_{k,j})
\right\|\leq \kappa_{bc}d\sqrt{m}\kappa_{lgc}\left(1+\sqrt{\frac{n}{2}}\right)\Gamma_{k,j}	\mid \mathcal{F}_{k,j} \right)\\
&= 	\mathbb{P}\left(	\left\|	J^c_{S_{k,j}}(x_{k,j})^Tc(x_{k,j})-J^c(x_{k,j})^Tc(x_{k,j})
				\right\|\leq \kappa_{bc}d\sqrt{m}\kappa_{lgc}\left(1+\sqrt{\frac{n}{2}}\right)\gamma_{k,j}	\right)\nonumber\\
&\ge \mathbb{P}\left(	\left\|	J^c_{S_{k,j}}(x_{k,j})-\mathbb{E}_u [J^c_{S_{k,j}}(x_{k,j})] 	
				\right\|\leq \sqrt{\frac{d^2m n\kappa_{lgc}^2}{2}} \gamma_{k,j}\right)\nonumber\\
&\ge \frac{1}{2}.
 \end{align*}
			
Let $\hat{\kappa}:= \kappa_{bc}d\sqrt{m}\kappa_{lgc}\left(1+\sqrt{\frac{n}{2}}\right)$.
Define the event
$$V_j=\left\lbrace 	\left\|	J^c_{S_{k,j}}(X_{k,j})^Tc(X_{k,j})-J^c(X_{k,j})^Tc(X_{k,j})
		\right\|\leq \hat{\kappa}\Gamma_{k,j}	\right\rbrace.
$$
Then $\mathbb{P}\left( V_j \mid \mathcal{F}_{k,j}\right)\ge \frac{1}{2}$.
Define a random walk
			\begin{align}\label{radwk}
				W^R_l=\sum_{j=0}^{l} (2 \textbf{1}_{V_j}-1).
			\end{align}
Obviously,
			\begin{align*}
				W^R_{l+1}=
				\begin{cases}
					W^R_{l}+1, & \text{if}\ \textbf{1}_{V_{l+1}}=1, \\
					W^R_{l}-1, & \text{if}\ \textbf{1}_{V_{l+1}}=0.
				\end{cases}
			\end{align*}
Let $\mathcal{F}^V_{l-1}$ be the $\sigma$-algebra generated by 		$\textbf{1}_{V_{0}},\ldots,\textbf{1}_{V_{l-1}}$, which is contained in $\mathcal{F}_{k,l}$,
Then
\begin{align*}
\mathbb E\left[ W^R_l \mid 		\mathcal{F}^V_{l-1}\right]&= \mathbb E
				\left[W^R_{l-1}  \mid\mathcal{F}^V_{l-1} \right] +\mathbb  E \left[	2\cdot
				\textbf{1}_{V_l} -1  \mid	\mathcal{F}^V_{l-1}\right] \nonumber\\
				&=  W^R_{l-1} + 2 \mathbb P \left( V_l \mid	\mathcal{F}^V_{l-1}\right) -1 \nonumber\\
				& \ge  W^R_{l-1},
\end{align*}
So $W^R_l$ is a submartingale with $\pm 1$ increments.
Thus, by \cite[Theorem 4.3.1]{DR2010},
			\begin{equation}\label{prow1}
				\mathbb P \left( \limsup_{l\to\infty} W^R_l=+ \infty
				\right) =1.
			\end{equation}
			
Suppose that \eqref{gammato} is not true.
We discuss the case that
\begin{align}\label{bigthe}
\|J^c(x_{k,j})^Tc(x_{k,j})\|> \varepsilon
\end{align}
happens for some $\varepsilon>0$ and all $j$.
Define the random variable
$$
\Pi_j=\ln_2 (\Gamma_{k,j}),
$$
whose realization is denoted as $\pi_j=\ln_2 (\gamma_{k,j} )$.
Since $\{\gamma_{k,j}\}_{j} \to 0$, there exists $j_0$ such that for all $j \ge j_0$,
\begin{align}\label{tb4}		
				\gamma_{k,j}\leq \min\left\lbrace	
			\frac{\varepsilon}{p_4+\hat{\kappa}},
			\frac{\gamma_{k}}{2} \right\rbrace.
\end{align}
We discuss two	cases for $j\in\mathcal{J}, j\ge j_0$.
			
If $\textbf{1}_{V_j}=1$, then
\begin{align*}
&\left\|J^c(x_{k,j})^Tc(x_{k,j}) \right\|-\left\|J^c_{s_{k,j}}(x_{k,j})^Tc(x_{k,j})\right\|\\
&\leq	\left\|	J^c_{s_{k,j}}(x_{k,j})^Tc(x_{k,j})-J^c(x_{k,j})^Tc(x_{k,j})	\right\|
\leq \hat{\kappa}\gamma_{k,j},
\end{align*}
which, together with (\ref{bigthe}), implies that
\begin{align*}
	\left\|	J^c_{s_{k,j}}(x_{k,j})^Tc(x_{k,j})\right\|\geq 	\varepsilon- \hat{\kappa}\gamma_{k,j}.
\end{align*}
Thus by (\ref{tb4}) and the updating rule (\ref{gamaupd}), we have $\gamma_{k,j+1}=2\gamma_{k,j} $.
 This gives
\begin{align}\label{tb5}
				\pi_{j+1}-\pi_j=1.
\end{align}
			
If $\textbf{1}_{V_j}=0$, then $\gamma_{k,j+1}\ge \frac{1}{2}\gamma_{k,j}$ due to (\ref{gamaupd}) and $j\in\mathcal{J}$.
Hence $\pi_{j+1}\ge \pi_j-1$. This  implies
			\begin{align}\label{tb6}
				\pi_{j+1}-\pi_j\ge -1.
			\end{align}
			
Combining (\ref{tb5}) and (\ref{tb6}), we have
			\begin{equation*}
				\pi_j-\pi_{j_0}\ge w^R_j-w^R_{j_0},
			\end{equation*}
where $w^R_j$ is a realization of $W^R_j$ corresponding to the particular realization $\pi_j$.
Because $\pi_{j_0}$ and $w^R_{j_0}$ is finite, by (\ref{prow1}), we have
	\begin{equation}\label{zproin}
				\mathbb{P} \left( \limsup_{j\to\infty}
				\Pi_j=+	\infty  \mid \mathcal E_2\right) =1.
	\end{equation}
On the other hand, by \eqref{gammato}, we have $\pi_j<0$ for sufficiently large $j$, which contradicts (\ref{zproin}).
Therefore, we have
			\begin{align*}
				\P\left( 	\liminf_{j\to \infty}\|J^c(X_{k,j})^T
				c(X_{k,j})\|=0 \mid \mathcal E_2 \right)  =1.
			\end{align*}
The proof is completed.
	\end{proof}
	
Combining Lemmas \ref{lembeforthem} and \ref{lemBC}, we obtain the convergence result of Algorithm \ref{alg2} as follows.

\begin{theorem}\label{theminner}
Suppose that Assumptions \ref{ass1rd} and \ref{ass2rd}(i)-(iii) hold for every realization of  Algorithm \ref{alg2} with $k$.
It holds that either
	\begin{itemize}
		\item[(i)] for any $\epsilon>0$ and $0<p<1$, we can find $j$ such that
		\begin{align}\label{akkt}
				\|F_{S_{k,j+1}}\|_* \leq \theta \|F_{s_k}\|_*+\epsilon_k
		\end{align}
with probability at least $p$; or	
		\item[(ii)] the random sequence $\{X_{k,j}\}$ generated  by Algorithm
		\ref{alg2}
		satisfies
		\begin{align*}
			\liminf_{j\to \infty}\|J^c(X_{k,j})^T c(X_{k,j})\|=0
		\end{align*}
		almost surely.
	\end{itemize}
\end{theorem}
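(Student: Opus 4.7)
The plan is to carry out a dichotomy on the number of times the penalty parameter $\delta_{k,j}$ is decreased in a given realization of Algorithm \ref{alg2}, and then invoke the two preceding lemmas on the appropriate event. Concretely, define
\begin{align*}
\mathcal{E}_1 &= \{\delta_{k,j}\ \text{decrease finitely many times}\},\\
\mathcal{E}_2 &= \{\delta_{k,j}\ \text{decrease infinitely many times}\},
\end{align*}
which are complementary and partition the sample space associated with the inner iterations. First I would argue that every realization falls in exactly one of $\mathcal{E}_1$ or $\mathcal{E}_2$, so the theorem's conclusion need only be established conditionally on each.

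On the event $\mathcal{E}_1$, the sequence $\delta_{k,j}$ stabilizes at some $\delta_{k,j_0}$ for $j \geq j_0$, so Algorithm \ref{alg2} reduces to the derivative-free LM scheme analyzed in Lemma \ref{lembeforthem}. I would directly apply that lemma: for any prescribed probability $p \in (0,1)$, there exists an index $j$ for which
\begin{align*}
\|F_{S_{k,j+1}}\|_* \leq \theta \|F_{s_k}\|_* + \epsilon_k
\end{align*}
holds with probability at least $p$. This yields conclusion (i).

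On the event $\mathcal{E}_2$, Lemma \ref{lemBC} applies verbatim, giving
\begin{align*}
\liminf_{j \to \infty} \|J^c(X_{k,j})^T c(X_{k,j})\| = 0
\end{align*}
almost surely on $\mathcal{E}_2$, which is conclusion (ii). Combining the two cases with the partition $\mathcal{E}_1 \cup \mathcal{E}_2 = \Omega$ finishes the proof.

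The main conceptual obstacle is not in this proof itself — it is entirely a case synthesis — but rather in the mutual exclusivity of the two conclusions: conclusion (i) is a high-probability statement at a finite iteration, while conclusion (ii) is an almost-sure asymptotic statement, and they are attached to disjoint events $\mathcal{E}_1$ and $\mathcal{E}_2$. I would therefore phrase the conclusion as a disjunction over realizations: if $\mathbb{P}(\mathcal{E}_1) > 0$ then (i) holds conditionally on $\mathcal{E}_1$ by Lemma \ref{lembeforthem}, and on the complementary event $\mathcal{E}_2$ the feasibility-stationarity conclusion (ii) holds by Lemma \ref{lemBC}. No further calculation is needed beyond citing these two lemmas.
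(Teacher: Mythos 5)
Your proposal is correct and follows exactly the paper's argument: the paper derives Theorem \ref{theminner} precisely by splitting on whether $\Delta_{k,j}$ decreases finitely or infinitely many times and then invoking Lemma \ref{lembeforthem} on the first event and Lemma \ref{lemBC} on the second. No further commentary is needed.
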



Now we show that Algorithm \ref{alg1} converges	to an approximate KKT point
with arbitrary precision with high probability if Algorithm \ref{alg2} always
return to Algorithm
	\ref{alg1} with (\ref{innerstoprd}) being satisfied.

\begin{lemma}\label{Gsto0ther}
Suppose that Assumptions \ref{ass2rd}(v) holds for every realization of  Algorithm  \ref{alg1}, and Algorithm \ref{alg2} always achieves (\ref{innerstoprd}),
then $\{w_k\}$ generated by Algorithm \ref{alg1}
	satisfies
	\begin{align}\label{Gsto0}
		\lim_{k\to\infty}	F_{s_k}= 0.
	\end{align}
\end{lemma}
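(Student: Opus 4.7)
The plan is to derive the conclusion from a straightforward recursion on the merit function values. Under the hypothesis that Algorithm \ref{alg2} always returns to Algorithm \ref{alg1} with (\ref{innerstoprd}) satisfied, every iteration of Algorithm \ref{alg1}, whether the trial step $\hat w_k$ is accepted on line 6 or the inner LM loop is invoked, produces
\begin{align*}
\|F_{s_{k+1}}\|_* \leq \theta\|F_{s_k}\|_*+\epsilon_k
\end{align*}
for all $k\ge 0$, with $\theta\in(0,1)$ fixed and $\epsilon_k\downarrow 0$ by Assumption \ref{ass2rd}(v).

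Next, I would unroll this recursion. By induction on $k$,
\begin{align*}
\|F_{s_{k+1}}\|_* \leq \theta^{k+1}\|F_{s_0}\|_* + \sum_{i=0}^{k}\theta^{k-i}\epsilon_i.
\end{align*}
The geometric term $\theta^{k+1}\|F_{s_0}\|_*$ vanishes as $k\to\infty$, so the whole task reduces to controlling the convolution $\sum_{i=0}^{k}\theta^{k-i}\epsilon_i$ of a geometric sequence with a sequence tending to zero.

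For the latter, I would use the usual splitting. Given $\eta>0$, choose $N$ large enough that $\epsilon_i\le \tfrac12(1-\theta)\eta$ for all $i\ge N$, which is possible since $\epsilon_k\downarrow 0$. Then for $k\ge N$,
\begin{align*}
\sum_{i=0}^{k}\theta^{k-i}\epsilon_i
\;=\;\sum_{i=0}^{N-1}\theta^{k-i}\epsilon_i+\sum_{i=N}^{k}\theta^{k-i}\epsilon_i
\;\le\;\theta^{k-N+1}\sum_{i=0}^{N-1}\theta^{N-1-i}\epsilon_i + \tfrac{\eta}{2}.
\end{align*}
The first term is a fixed finite sum multiplied by $\theta^{k-N+1}\to 0$, so for $k$ sufficiently large the whole right-hand side is at most $\eta$. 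Combined with $\theta^{k+1}\|F_{s_0}\|_*\to 0$, this gives $\limsup_{k\to\infty}\|F_{s_{k+1}}\|_*\le\eta$, and since $\eta>0$ was arbitrary, $\|F_{s_k}\|_*\to 0$. The definition of $\|\cdot\|_*$ as the sum of three nonnegative component norms forces each component, and hence the full vector $F_{s_k}$, to converge to zero, establishing (\ref{Gsto0}).

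There is no real obstacle here beyond the tail estimate for the convolution, which is a classical exercise; the crucial observation is simply that the stated hypothesis guarantees the decrease inequality uniformly in $k$, so the stochastic subtleties that drove Lemmas \ref{lembeforthem}--\ref{lemBC} do not enter this argument at all.
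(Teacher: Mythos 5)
Your proof is correct and rests on the same key observation as the paper's: the hypothesis guarantees the recursion $\|F_{s_{k+1}}\|_*\le\theta\|F_{s_k}\|_*+\epsilon_k$ at every outer iteration (whether the Newton-like step is accepted or Algorithm \ref{alg2} is invoked), and with $\theta\in(0,1)$ and $\epsilon_k\downarrow 0$ this forces $\|F_{s_k}\|_*\to 0$, hence $F_{s_k}\to 0$ since $\|\cdot\|_*$ sums the norms of the three blocks of $F_{s_k}$. The only difference is cosmetic: you unroll the recursion and control the convolution $\sum_{i\le k}\theta^{k-i}\epsilon_i$ by tail-splitting, whereas the paper first bounds $\limsup_k\|F_{s_k}\|_*$ by $\epsilon_M/(1-\theta)$ and then takes the limit superior in the recursion to get $\mathcal{G}\le\theta\mathcal{G}$ and so $\mathcal{G}=0$; both arguments are standard and equally valid.
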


\begin{proof}
Let $\mathcal{G}:=\limsup\limits_{k\to\infty}\|F_{s_{k}}\|_*$ and $\epsilon_M=\sup_k \epsilon_k$.	
By (\ref{innerstoprd}),
\begin{align*}
\|F_{s_{k+1}}\|_*-\frac{\epsilon_M}{1-\theta}
			&\leq \theta \left(\|F_{s_{k}}\|_*-\frac{\epsilon_M}{1-\theta}\right).
\end{align*}
This implies $\mathcal{G} \leq\frac{\epsilon_M}{1-\theta}$ because of $0<\theta<1$.
Taking the limit superior in (\ref{innerstoprd}), we have $\mathcal{G}\leq \theta \mathcal{G}$, which gives $\mathcal{G}=0$.
Hence,
\begin{align*}
0\leq \liminf\limits_{k\to\infty}\|F_{s_{k}}\|_* \leq \mathcal{G}=0,
\end{align*}
thus $\lim\limits_{k\to\infty}\|F_{s_{k}}\|_* =0$. So \eqref{Gsto0} holds true.
\end{proof}


	Note that \eqref{prob} is equivalent to
\begin{align}\label{prob2}
	\min_{x\in\mathbb{R}^n} \ \frac{1}{2}\|z\|^2,\quad
	\text{s.t.} \quad r(x)=z, \ c(x)=0.
\end{align}
The KKT conditions for \eqref{prob2} are
\begin{align}\label{KKT-prob2}
	F(w)=	\begin{bmatrix}
		J^r(x)^Tz-J^c(x)^Ty\\
		r(x)-z\\
		c(x)
	\end{bmatrix}=0.
\end{align}
where $y$ and $-z$ are the Lagrange multipliers for $c(x)=0$ and
$r(x)-z=0$, respectively.
Based on Lemma \ref{Gsto0ther}, we have:

\begin{theorem}\label{outcon}
Suppose that Assumptions \ref{ass1rd}(i) and \ref{ass2rd}(iii)-(v) hold for every realization of  Algorithm  \ref{alg1}, and Algorithm \ref{alg2} always achieves (\ref{innerstoprd}),
then for any $\epsilon>0$ and $0<p<1$, we can find $k_0$ such that
\begin{align}\label{akkt}
		\|F_{k_0}\|  \leq \epsilon
\end{align}
with probability at least $p$.
\end{theorem}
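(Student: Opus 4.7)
The plan is to apply the triangle inequality
\[
\|F(w_{k_0})\| \;\leq\; \|F_{s_{k_0}}\| + \|F(w_{k_0})-F_{s_{k_0}}\|,
\]
drive the first summand to zero deterministically via Lemma \ref{Gsto0ther}, and control the second summand with high probability via Lemma \ref{lemAB}. Since the hypotheses here contain those of Lemma \ref{Gsto0ther}, that lemma gives $\|F_{s_k}\|_*\to 0$ along every realization, hence $\|F_{s_k}\|\to 0$, so one can fix $k_1$ so that $\|F_{s_k}\|\leq \epsilon/2$ for all $k\geq k_1$.

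For the second summand, $F(w_k)-F_{s_k}$ vanishes in its second and third blocks, so Assumption \ref{ass2rd}(iii) yields
\[
\|F(w_k)-F_{s_k}\|\;\leq\;\kappa_{bz}\|J^r(x_k)-J^r_{s_k}(x_k)\|+\kappa_{by}\|J^c(x_k)-J^c_{s_k}(x_k)\|.
\]
Combining the bias bounds \eqref{lem1A}--\eqref{lem1B} with the variance bounds \eqref{lem1varA}--\eqref{lem1varB} and the multivariate Chebyshev inequality, in the same style as the derivation of \eqref{pgrad} in the proof of Lemma \ref{lembeforthem}, I would obtain
\[
\mathbb{P}\bigl(\|J^r_{s_k}(x_k)-J^r(x_k)\|\leq \kappa_r\gamma_k\,\big|\,\mathcal{F}_{k-1}\bigr)\;\geq\;\tfrac{1+p}{2},
\]
and an analogous bound for $J^c$ with some constant $\kappa_c$; intersecting the two events via the elementary inequality $\mathbb{P}(A\cap B)\geq \mathbb{P}(A)+\mathbb{P}(B)-1$ gives
\[
\mathbb{P}\bigl(\|F(w_k)-F_{s_k}\|\leq C\gamma_k\,\big|\,\mathcal{F}_{k-1}\bigr)\;\geq\;p,
\]
with $C=\kappa_{bz}\kappa_r+\kappa_{by}\kappa_c$ an explicit constant depending on $p,m,n$ and the problem data.

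The main obstacle is to secure $\gamma_k\to 0$, so that at a large enough deterministic index $k_0$ one has $C\gamma_{k_0}\leq \epsilon/2$. On accepted main-loop iterations of Algorithm \ref{alg1}, $\gamma_{k+1}=\|d_{x_k}\|$, and the block system \eqref{sysrd} together with the non-singularity of $T_{s_k}$ (Assumption \ref{ass2rd}(iv)) gives $\|d_{w_k}\|\leq \|T_{s_k}^{-1}\|\,\|F_{s_k}\|$; a uniform bound on $\|T_{s_k}^{-1}\|$, inferred from the block structure of $T_{s_k}$ under the stated assumptions, lets $\gamma_{k+1}$ inherit the already-deterministic decay of $\|F_{s_k}\|$. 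When the step is instead supplied by Algorithm \ref{alg2}, the non-monotone update rule \eqref{gamaupd} must be tracked in the same random-walk spirit as the proof of Lemma \ref{lemBC}; this bookkeeping is the delicate step. Once $\gamma_k\to 0$ is secured, I pick $k_0\geq k_1$ large enough that simultaneously $\|F_{s_{k_0}}\|\leq \epsilon/2$ and $C\gamma_{k_0}\leq \epsilon/2$, and the triangle inequality then yields $\|F(w_{k_0})\|\leq \epsilon$ with probability at least $p$.
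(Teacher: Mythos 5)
Your proposal follows essentially the same route as the paper: the same triangle-inequality split $\|F_{k}\|\leq\|F_{s_{k}}\|+\|F_{k}-F_{s_{k}}\|$, Lemma \ref{Gsto0ther} for the deterministic decay of $\|F_{s_k}\|$, the observation that only the first block of $F_{s_k}-F_k$ is nonzero together with Assumption \ref{ass2rd}(iii), and Lemma \ref{lemAB} plus Chebyshev to get $\|F_{s_k}-F_k\|\leq \kappa_G\gamma_k$ with probability at least $p$ (your explicit intersection of the two Jacobian events via $\mathbb{P}(A\cap B)\geq\mathbb{P}(A)+\mathbb{P}(B)-1$ is in fact slightly more careful than the paper, which combines the two bounds without adjusting the probability level). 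The one step you flag as delicate --- establishing $\gamma_k\to 0$ when the step is supplied by Algorithm \ref{alg2} --- does not need any random-walk bookkeeping: the paper simply notes that the update rules of Algorithm \ref{alg2} (the halving in line 6 and the cap $\min\{2\gamma_{k,j},\gamma_{k}\}$ in \eqref{gamaupd}) guarantee $\gamma_{k+1}\leq\gamma_k$ whenever Algorithm \ref{alg2} is invoked, so that, combined with $\gamma_{k+1}=\|d_{x_k}\|\to 0$ on the accepted Newton iterations (obtained from Assumption \ref{ass2rd}(iv), Lemma \ref{Gsto0ther} and \eqref{sysrd}, exactly as you argue), the whole sequence $\{\gamma_k\}$ tends to zero. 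With that observation substituted for your proposed random-walk tracking, your argument is complete and coincides with the paper's proof.
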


\begin{proof}
	Denote $J^r_{k}:=J^r(x_k), J^c_{k}:=J^c(x_k), F_{s_k}:=F_{s_k}(w_k;
	x_k,y_k,\rho_k,\delta_k), F_k=F(w_k)$.
  By (\ref{Grd}) and \eqref{KKT-prob2},
	\begin{align*}
		F_{s_k}-F_k=\begin{bmatrix}
			\left(J^r_{s_k}- J^r_{k}\right) ^Tz_k-
			\left(J^c_{s_k}- J^c_{k}\right) ^Ty_k\\
			0\\
			0
		\end{bmatrix}.
	\end{align*}
	Hence,
	\begin{align}\label{tet1}
		\|F_{s_k}-F_k\| \leq \|J^r_{s_k}- J^r_{k}\|\|z_k\|+	\|J^c_{s_k}-
		J^c_{k}\|\|y_k\|.
	\end{align}
By  Lemma \ref{lemAB} and multivariate 	Chebyshev's inequality
	\cite{Chebyshevproof},
	for any $s>0$,
	\begin{align*}
		\mathbb{P}\left[\left\|J^r_{S_k}(x_k)-\mathbb{E}_u[J^r_{S_k}(x_k)]\right\|\leq
		s	\right]\ge 1- \frac{d^2p n\kappa_{lgr}^2}{4 s^2} \gamma_k^2.
	\end{align*}
	For any $0<p<1$, let  $s=d\kappa_{lgr}\sqrt{\frac{p n}{4(1-p)}}\gamma_{k}$.
	By (\ref{lem1A}),
	\begin{align}
		&	\mathbb{P}\left[	\left\|J^r_{S_k}(X_{k})-J^r(X_k)
		\right\|\leq d\sqrt{p}\kappa_{lgr}\left(1+\sqrt{\frac{n}{4(1-p)}}
		\right)\Gamma_{k}\ \bigg |\ \mathcal{F}_{k}				\right]
		\nonumber\\
		&= 	\mathbb{P}\left[	\left\|J^r_{S_k}(x_{k})-J^r(x_k)
		\right\|\leq d\sqrt{p}\kappa_{lgr}\left(1+\sqrt{\frac{n}{4(1-p)}}
		\right)\gamma_{k}\right] \nonumber\\
		&\ge \mathbb{P}	
		\left[\left\|J^r_{S_k}(x_k)-\mathbb{E}_u[J^r_{S_k}(x_k)]\right\|\leq
		d\kappa_{lgr}\sqrt{\frac{pn}{4(1-p)}}\gamma_{k}\right]  \nonumber\\
		&\ge p.
	\end{align}
	Similarly, we have
	\begin{align}
		&	\mathbb{P}\left[	\left\|J^c_{S_k}(X_{k})-J^c(X_k)
		\right\|\leq d\sqrt{m}\kappa_{lgc}\left(1+ \sqrt{\frac{n}{4(1-p)}}
		\right)\Gamma_{k} \ \Big |\ \mathcal{F}_{k}	\right]
		\ge p.
	\end{align}
Let $\kappa_G:=(\kappa_{bz}d\sqrt{p}\kappa_{lgr}+\kappa_{by}
	d\sqrt{m}\kappa_{lgc} ) \left(1+\sqrt{\frac{n}{4(1-p)}}\right)$.
	It then follows from (\ref{tet1}) and Assumptions \ref{ass2rd}(iii) that
	\begin{align}\label{tet2}
		&	\mathbb{P}\left[	\left\|F_{S_k}(W_k)-F_k(W_k)
		\right\|\leq \kappa_G	\Gamma_{k} \mid \mathcal{F}_{k}
		\right]  =  \mathbb{P}\left[	\left\|F_{S_k}(w_k)-F_k
		\right\|\leq \kappa_G		\gamma_{k}
		\right]      \ge p.
	\end{align}
		By Assumption \ref{ass2rd}(iv), Lemma \ref{Gsto0ther} and
		(\ref{sysrd}), we
	have
	$\{d_{w_k}\}_k \to 0$.
	Note that at the $k$-th iteration, $\gamma_{k+1}=\|d_{x_k}\|$ if Algorithm
	\ref{alg2} is not called;
	otherwise we set $\gamma_{k+1}:=\gamma_{k,j_{t}+1}$,
	where $j_t$ is the index given by Algorithm \ref{alg2} such that
	(\ref{innerstoprd}) is satisfied.
	By  the updating rule of $\gamma_{k,j}$, we have $\gamma_{k+1} \leq
	\gamma_{k}$.
	Thus, $\{d_{w_k}\}_k \to 0$ implies $\{\gamma_{k}\}_k \to 0$.
	Hence by  Lemma 	\ref{Gsto0ther},
	for any $\epsilon>0$,
	\begin{align}\label{lst}
		\kappa_G \gamma_k \leq \frac{1}{2}\epsilon, \quad 	\|F_{s_k}\| \leq
		\frac{1}{2}\epsilon
	\end{align}
	holds for sufficiently large $k$ for every realization.
	Since $\|F_k\| \leq \|F_{s_k}\|+ \|F_{s_k}-F_k\| $,  by (\ref{tet2}) and
	(\ref{lst}),  we can find $k_0$ such that
	\begin{align*}
		\|F_{k_0}\|  \leq \epsilon
	\end{align*}
	with probability at least $p$.
\end{proof}

\section{Numerical experiments}\label{sec4}
In this section, we test Algorithm \ref{alg1} (DFRCNLS) and compare it with
two derivative-free algorithms for constrained optimization problems: COBYLA
(cf. \cite{PowellCOBYLA}) using	Matlab interface of PDFO\cite{pdfo}, a direct search optimization method
	that models the objective and constraint functions by linear interpolation,
and DEFT-FUNNEL (cf. \cite{Sampaio2021}), an  global optimization algorithm that belongs to the class of trust-region sequential quadratic optimization algorithms and uses the polynomial interpolation models as surrogates for the black-box functions.

The experiments are implemented on a laptop  with an AMD Core R7-6800H CPU (3.20GHz) and 32GB of RAM, using Matlab R2022b.

\subsection{Implementation details}
We set $\delta_0=1, \rho_0=0, \epsilon_{0}=10^{3}, \gamma_0=1, \theta=0.99$ in Algorithm \ref{alg1},
and $p_0=0.001, p_1=0.25, p_2=0.75, p_3=10^{-10}, p_4=10^{12}, \lambda_{\min}=10^{-8}$ in Algorithm \ref{alg2}, respectively.
We take $\delta_k=\max\left\{10^{-6}, \min\left\{0.1\delta_{k-1},\:\|F_{s_{k}}\|_*\right\}\right\}$ and
$\epsilon_{k+1}=\max\{\min\{10^{3}\delta_{k},0.99\epsilon_{k}\},0.9\epsilon_{k}\}$ (cf. \cite{Orban2020}).
The regularization parameter $\rho_{k}$ is updated as that in \cite[Algorithm 5.1]{Orban2020} so that the coefficient matrices of the system (\ref{sysrd}) are nonsingular.
The initialization multiplier $y_{0}$ is obtained by solving the problem
\begin{align*}	
\min\limits_{y}\:\frac{1}{2}
		\|(J^c_{s_0})^{T}y-(J^r_{s_0})^{T}r(x_{0})\|^{2}.
\end{align*}

Algorithm \ref{alg1} runs up to $K_1$ iterations and stops at iteration $k < K_1$ if
\begin{align}\label{nat}
\max\left\{\dfrac{\|(J^r_{s_{k}})^T	 		r(x_k)-(J^c_{s_{k}})^Ty_k\|_\infty}{\max\left\{100,\:\dfrac{\|y_k\|_1}{m}\right\}\Big /100},
	 	\:\|c(x_k)\|_\infty\right\}\leq 10^{-5}.
\end{align}
Algorithm \ref{alg2} runs up to $K_2$ iterations and stops at iteration $k < K_2$ if 		
 $\|(J^c_{s_{k,j}})^T c(x_{k,j})\|\leq 10^{-6}$, then it returns to Algorithm \ref{alg1}.
 The defaults values of $K_1$ and $K_2$ are 150 and 50, respectively.

In the implementation, the approximate Jacobian matrices are computed by \eqref{nabrrd}--\eqref{JBrd}. 	
We can generate $n$ independent random vectors $w_1,\ldots,w_n$ with identical distribution $\mathcal{N}(0,I)$ at each iteration, then compute the QR decomposition of the matrix  $[w_1,\ldots,w_n]$ to obtain $u_1,\ldots,u_n$; the corresponding algorithm is denoted as DFRCNLS-OSSv1.
We can also generate ten orthogonal direction sets, then choose one of them randomly as directions at each iteration; in this case the algorithm is denoted as DFRCNLS-OSSv2.
If we take $u_j=e_j(j=1,\ldots,n)$, then the Jacobian matrices are approximated by the finite difference, and the corresponding algorithm is denoted as DFRCNLS-FD,

We take the approximate Hessian matrices of $r_i(x)$, denoted by $H^{r_i}_{s}$, as zero matrix.
The approximate Hessian matrices of $c_i(x)$, denoted by $H^{c_i}_{s}$, are updated in the following three ways: by the symmetric rank-one update (SR1)
 \begin{align*}
 (H^{c_i}_{s})^+= \left\{\begin{array}{ll}
 H^{c_i}_{s}+ \dfrac{( y^{c_i}-H^{c_i}_{s}t)( y^{c_i}- H^{c_i}_{s}t)^T}{ ( y^{c_i}-H^{c_i}_{s}t)^Tt},
 & \mbox{if}\ \|( y^{c_i}-H^{c_i}_{s} t)^Tt \|\ge 10^{-7},\\
 H^{c_i}_{s}, & \mbox{otherwise},
 \end{array} \right.
 \end{align*}
 where $t=x^+ -x$, $y^{c_i}=[(J_{s}^{c})^+ ]_i^T-[J^c_{s}]_i^T $
and $[\cdot]_i$ represents the $i$-th row of the matrix,
or by the BFGS update
 \begin{align*}
(H^{c_i}_{s})^+= \left\{\begin{array}{ll}
  H^{c_i}_{s}+\dfrac{y^{c_i}(y^{c_i})^T}{t^Ty^{c_i}}- \dfrac{H^{c_i}_{s} t(H^{c_i}_{s} t)^T}{t^TH^{c_i}_{s} t},
 & \mbox{if}\  \|t^Ty^{c_i}\|\ge 10^{-7},\\
 H^{c_i}_{s}, & \mbox{otherwise},
 \end{array} \right.
 \end{align*}
or $(H^{c_i}_{s})^+=0$.

\subsection{Numerical results}
We present the numerical results using the 	performance profile developed by Dolan, Moré and Wild\cite{More2009}.
Denote by $\mathcal{S}$ the set of solvers and $\mathcal{P}$ the set of test problems, respectively.
For  each problem $p\in{\mathcal{P}}$, define a merit function
\begin{align}\label{metfc}
	\varphi(x)=\begin{cases}f(x),&\text{if}\:\|c(x)\|_{\infty}\leq1e-6,\\
		 f(x)+10^4  \|c(x)\|_{\infty},&\text{otherwise}.\end{cases}
	\end{align}
If a solver $s\in{\mathcal{S}}$  gives a point	$x$ that satisfies
\begin{align}\label{convtest}
\frac{	\varphi(x)-\varphi_p^* }{\varphi(x_0)-\varphi_p^*}\leq\tau,
\end{align}
where $\tau\in(0,1)$ is the tolerance and
$\varphi_p^*$ is the smallest value of merit function of problem $p$ obtained by any solver	in ${\mathcal{S}}$,
 we say that $s\in{\mathcal{S}}$  solves problem $p\in{\mathcal{P}}$ up to the	 convergence test.

Let $t_{p,s}$ be the least  number of function evaluations required by the solver  $s$ to solve the problem $p$ up to the convergence test.
If $s$ does not satisfy the convergence test for $p$ within the maximal number of function evaluations, then let $ t_{p,s}=\infty$.
The performance profile of $s$ is defined as
\begin{align}\label{ra}
		\pi_{s}(\alpha)=\frac{1}{n_{p}} \#\big\{p\in\mathcal{P}:r_{p,s}\leq\alpha\big\},
\end{align}
where  $r_{p,s}=\frac{t_{p,s}} 	{\min\{t_{p,s}:s\in\mathcal{S}\}}$ is the performance ratio,
$n_p$ is the number of test problems in $\mathcal{P}$ and $\#$ indicates the cardinal number of a set.
Hence,	$\pi_{s}(1)$ is the proportion of problems that $s$ performs better than any other solvers in $\mathcal{S}$ and $\pi_{s}(\alpha)$ is the proportion of problems solved by $s$ with a performance ratio at most $\alpha$. A better solver is indicated by a higher $\pi_{s}(\alpha)$.	
	
We collect 32 equality constrained nonlinear least squares problems from \cite{HS1981, HS1987, LL1999}
and	71 unconstrained ones from \cite{LL2018}.
We also create the degenerate problems by duplicating the constraints and squaring each function to the 32 equality constrained problems (cf. \cite{Orban2020}), that is, we solve
\begin{align*}
	\min_{x\in \mathbb R^n}& \quad\frac{1}{2}\sum_{i=1}^{p}r_i(x)^2\\
		\text{s.t.}& \quad c_i(x)=0,\quad  i=1,\ldots,m,\\
		&\quad c_i(x)^2=0, \quad i=1,\ldots,m.
\end{align*}
For each problem, we run DFRCNLS twenty times and take the median of the objective function value.

The results are given in the figures, where $\alpha$ is displayed in $\log_2$-scale.

 \begin{figure}[htbp]
	\centering
	\begin{subfigure}{0.32\linewidth}
		\centering
		\includegraphics[width=\linewidth]
		{./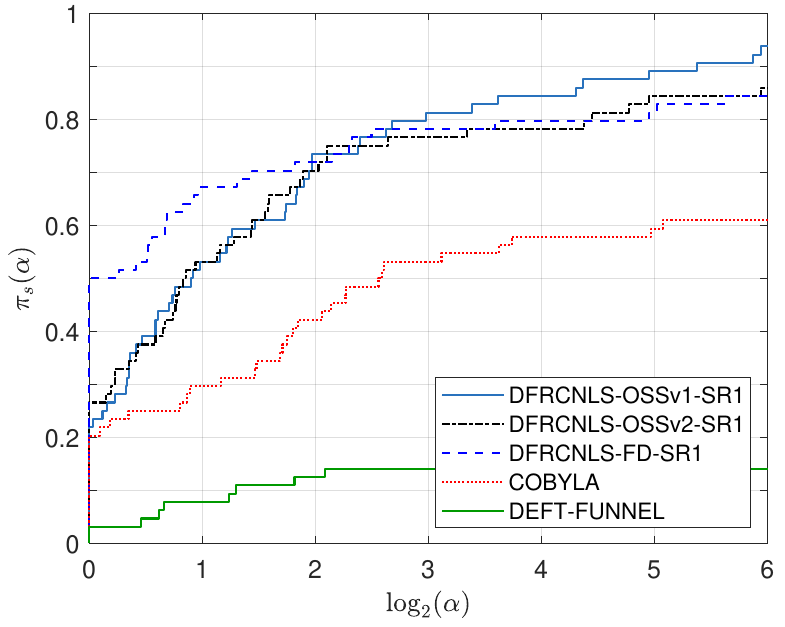}
		\label{f1}
	\end{subfigure}
	\centering
	\begin{subfigure}{0.32\linewidth}
		\centering
		\includegraphics[width=\linewidth]
			{./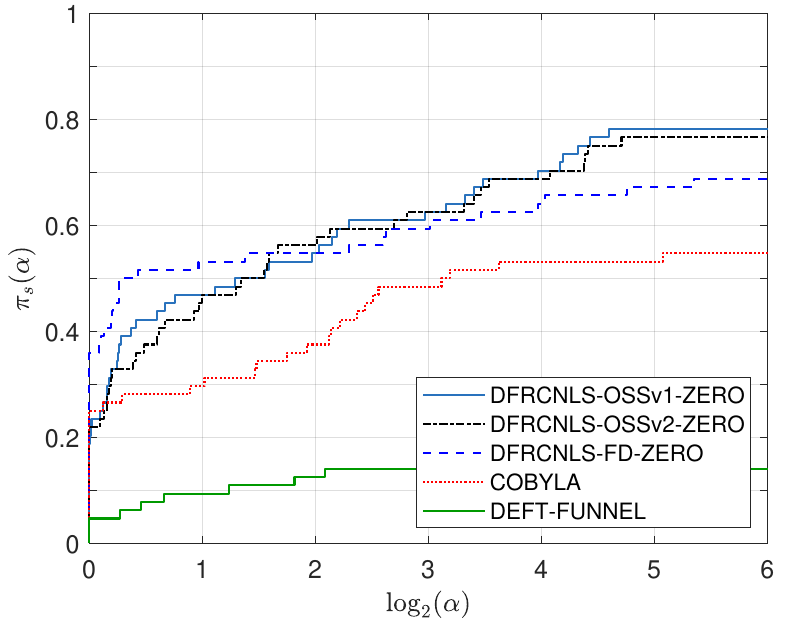}
		\label{f2}
	\end{subfigure}
	\centering
	\begin{subfigure}{0.32\linewidth}
		\centering
		\includegraphics[width=\linewidth]
	{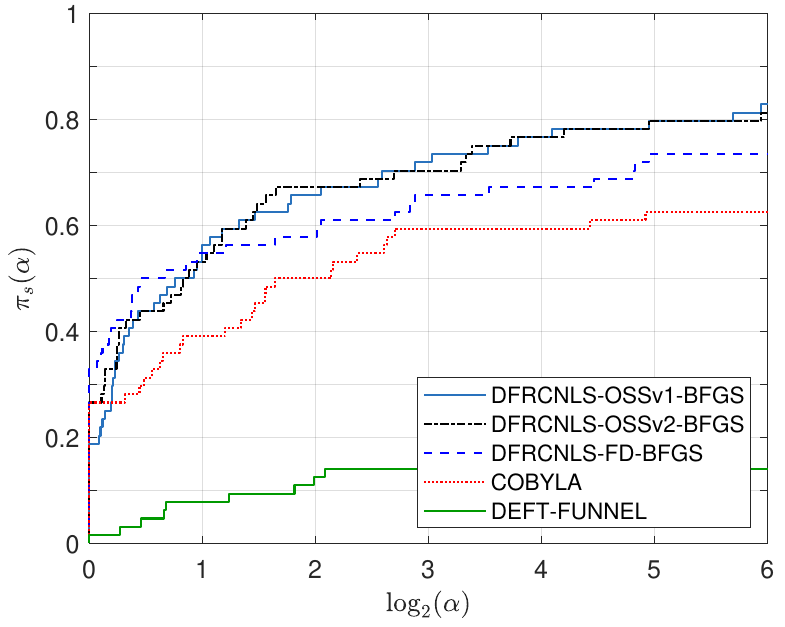}
		\label{f3}
	\end{subfigure}
   \caption{Performance profile on equality constrained nonlinear least squares problems using different Hessian approximations with tolerance $\tau=10^{-5}$.}
	\label{fcoby}
\end{figure}

 \begin{figure}[htbp]
		\centering
		\begin{subfigure}{0.32\linewidth}
			\centering
			\includegraphics[width=\linewidth]
			{./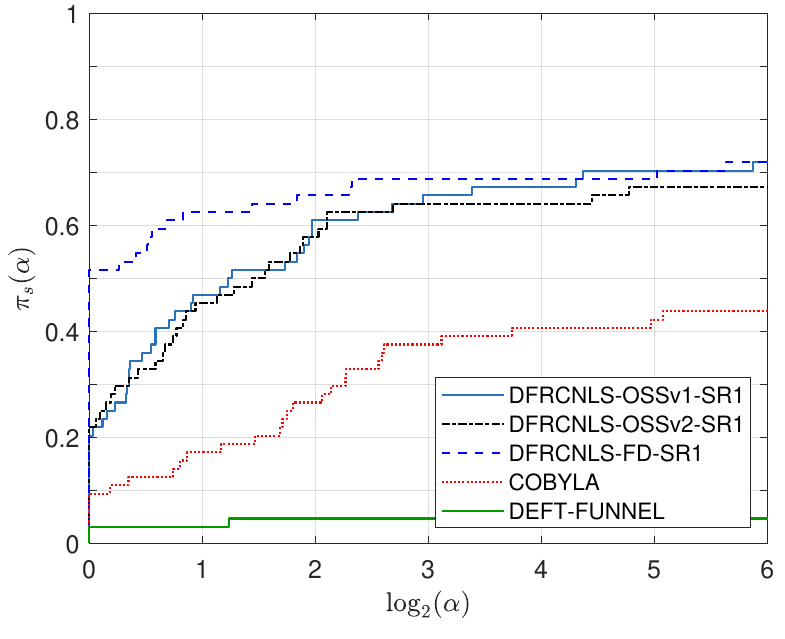}
			\label{f11}
		\end{subfigure}
		\centering
		\begin{subfigure}{0.32\linewidth}
			\centering
			\includegraphics[width=\linewidth]
			{./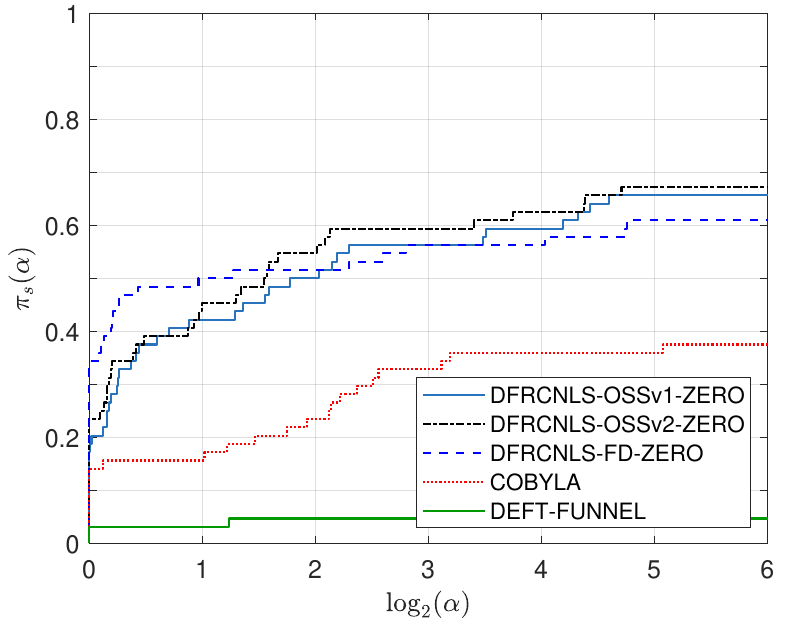}
			\label{f21}
		\end{subfigure}
		\centering
		\begin{subfigure}{0.32\linewidth}
			\centering
			\includegraphics[width=\linewidth]
			{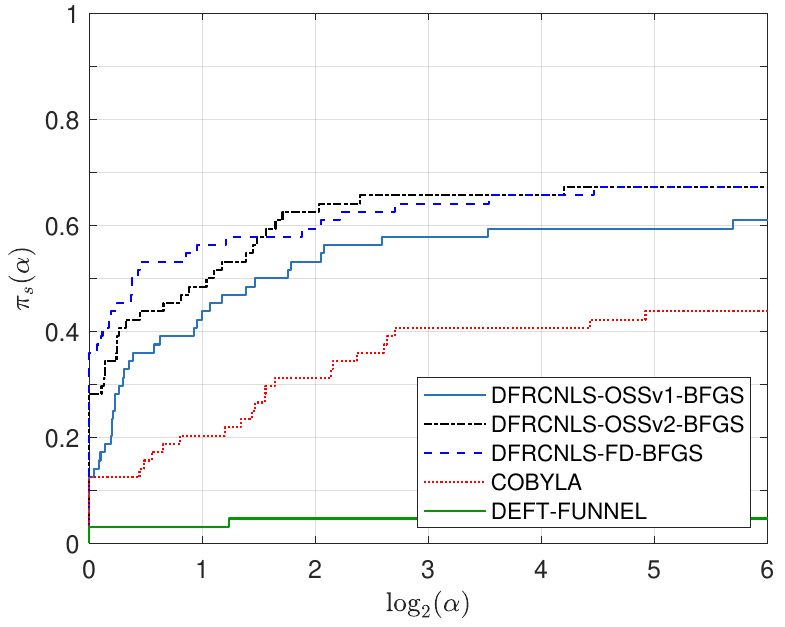}
			\label{f31}
		\end{subfigure}
		\caption{Performance profile on equality constrained nonlinear least squares problems using different Hessian approximations with tolerance $\tau=10^{-7}$.}
		\label{fcoby1}
	\end{figure}

As shown in Figure \ref{fcoby}, for equality constrained nonlinear least squares problems,
DFRCNLS-FD usually performs best among all the five solvers when $\tau=10^{-5}$,
and the SR1 updating of Hessian approximations of constraint functions seem more desirable than the BFGS updating and zero approximation.
At the higher accuracy $\tau=10^{-7}$, DFRCNLS still perform better than other solvers.

 \begin{figure}[htbp]
	\centering
	\begin{subfigure}{0.32\linewidth}
		\centering
		\includegraphics[width=\linewidth]
		{./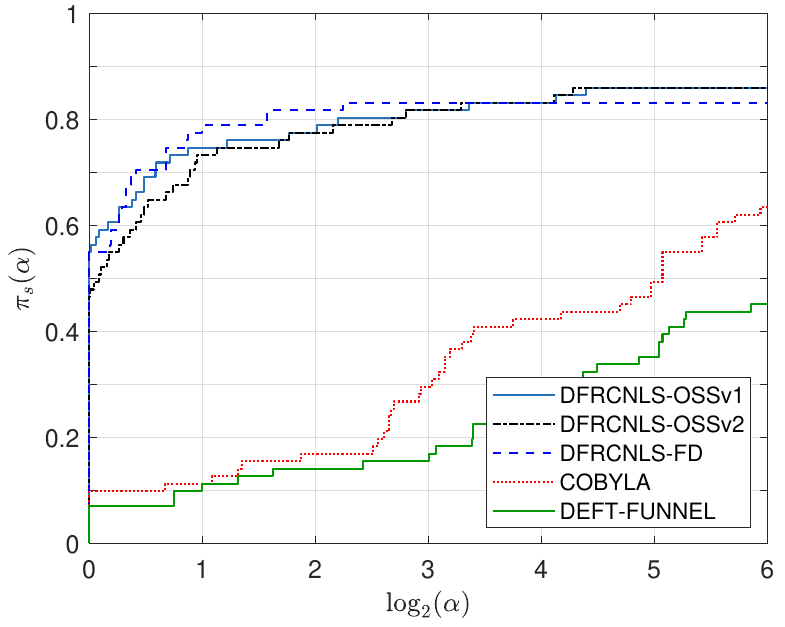}
		\caption{  Tolerance $\tau=10^{-5}$ }
		\label{f14}
	\end{subfigure}
	\centering
	\begin{subfigure}{0.32\linewidth}
		\centering
		\includegraphics[width=\linewidth]
			{./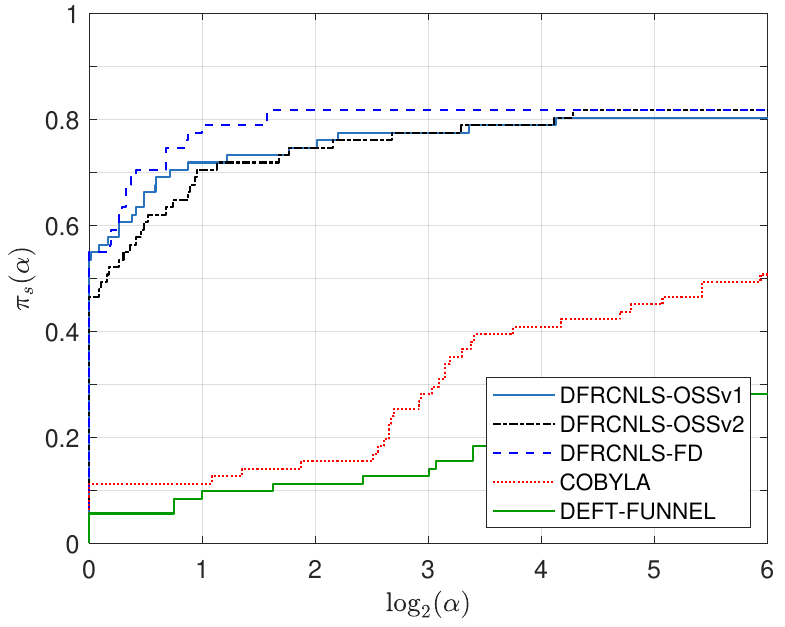}
		\caption{Tolerance $\tau=10^{-7}$}
		\label{f24}
	\end{subfigure}
   \caption{Performance profile on unconstrained nonlinear
   least squares problems. }
	\label{fcoby4}
\end{figure}

Note that there are no Hessian approximations of constraint functions for unconstrained nonlinear least squares problems. As presented in Figure \ref{fcoby4}, the performances of three implementations of 	DFRCNLS are similar and they seem to have advantages over other methods for unconstrained problems.

 \begin{figure}[htbp]
 	\centering
 	\begin{subfigure}{0.32\linewidth}
 		\centering
 		\includegraphics[width=\linewidth]
 		{./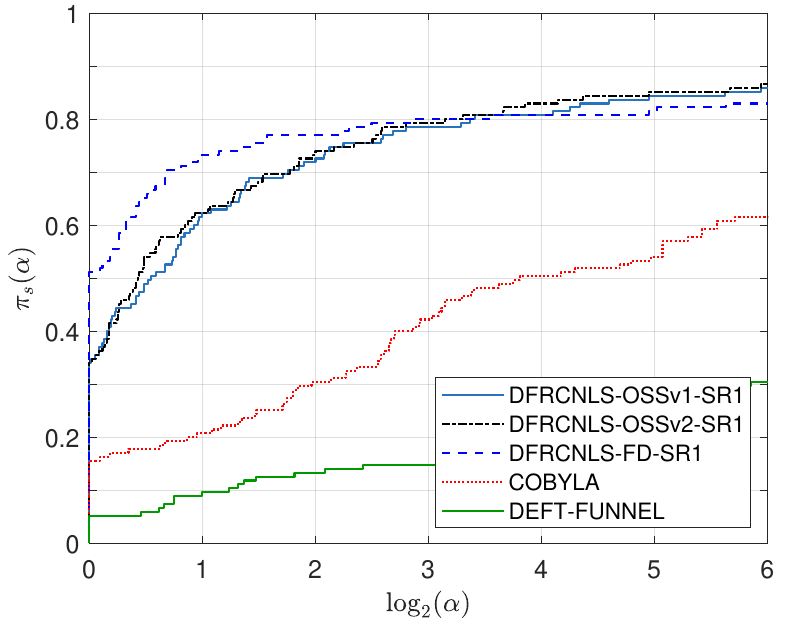}
 		\label{f12}
 	\end{subfigure}
 	\centering
 	\begin{subfigure}{0.32\linewidth}
 		\centering
 		\includegraphics[width=\linewidth]
 		{./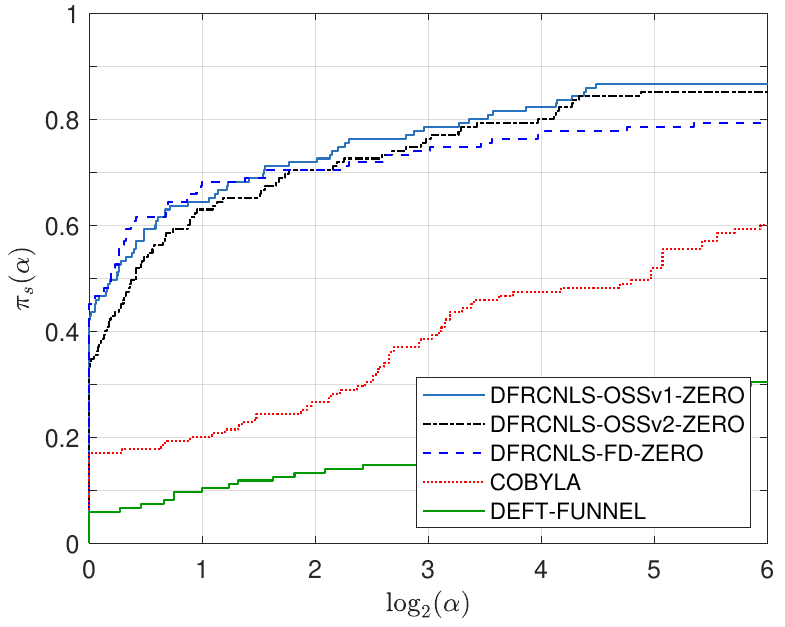}
 		\label{f22}
 	\end{subfigure}
 	\centering
 	\begin{subfigure}{0.32\linewidth}
 		\centering
 		\includegraphics[width=\linewidth]
 		{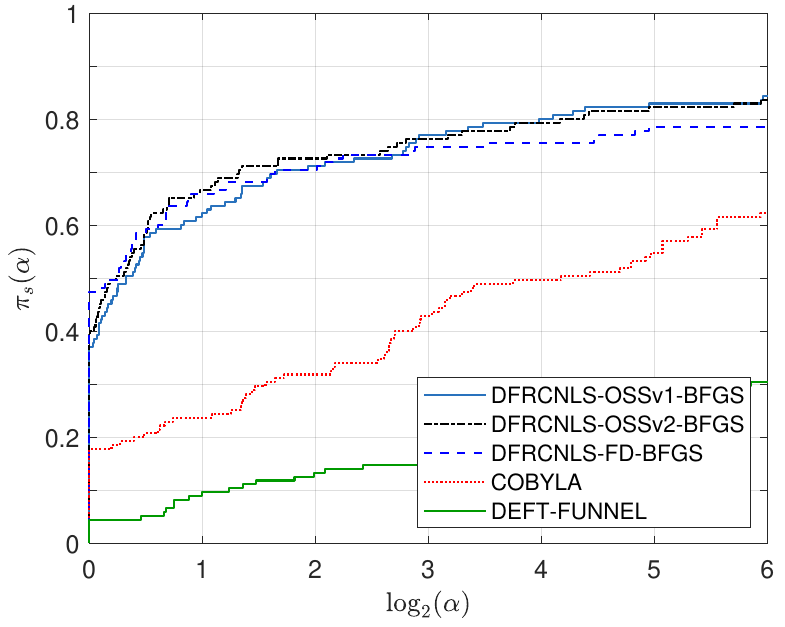}
 		\label{f32}
 	\end{subfigure}
 	\caption{Performance profile on all 135 test problems using different Hessian approximations with tolerance $\tau=10^{-5}$.}
 	\label{fcoby2}
 \end{figure}

 \begin{figure}[htbp]	
 \centering
 	\begin{subfigure}{0.32\linewidth}
 		\centering
 		\includegraphics[width=\linewidth]
 		{./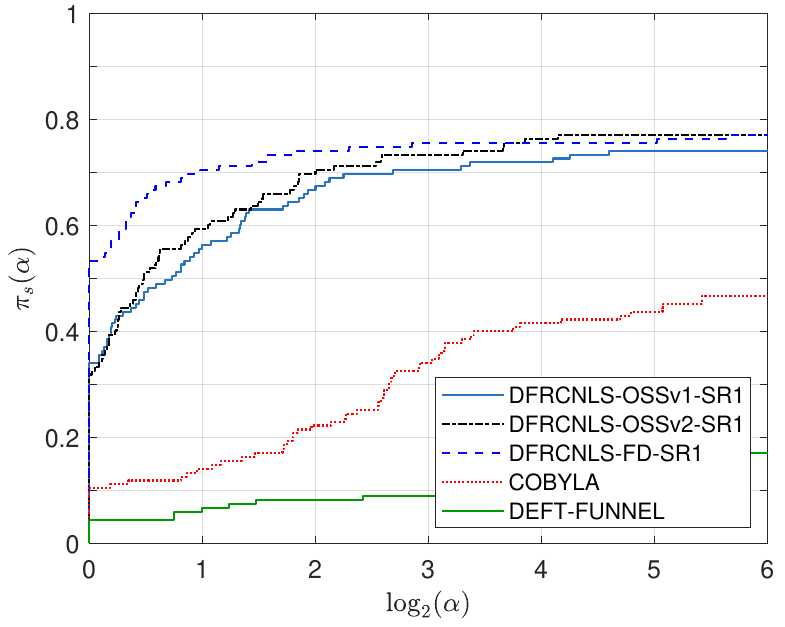}
 		\label{f13}
 	\end{subfigure}
 	\centering
 	\begin{subfigure}{0.32\linewidth}
 		\centering
 		\includegraphics[width=\linewidth]
 	{./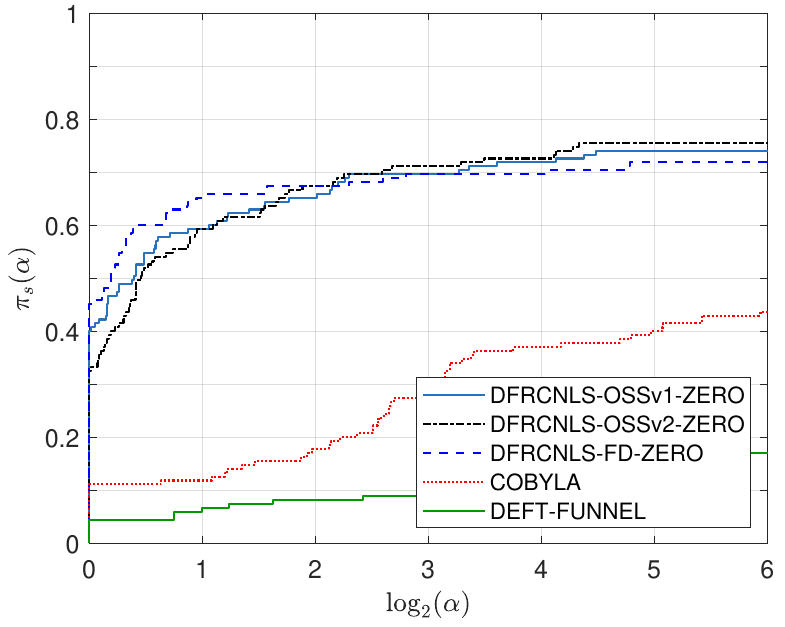}
 		\label{f23}
 	\end{subfigure}
 	\centering
 	\begin{subfigure}{0.32\linewidth}
 		\centering
 		\includegraphics[width=\linewidth]
 		{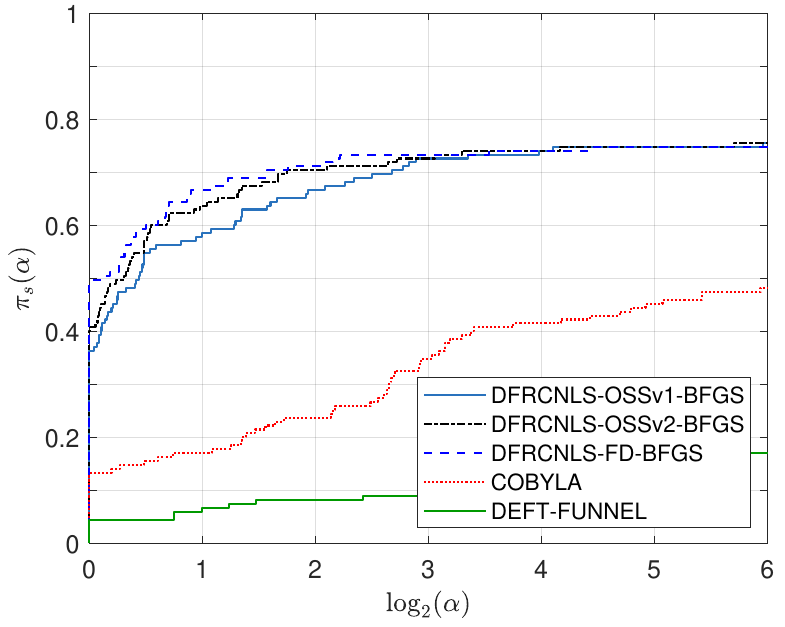}
 		\label{f33}
 	\end{subfigure}
 	\caption{Performance profile on all 135 test problems using different Hessian approximations with tolerance $\tau=10^{-7}$.}
 	\label{fcoby3}
 \end{figure}

As we can see from the figures, DFRCNLS is generally efficient than other solvers.
The reason may be that COBYLA and DEFT-FUNNEL are designed for general nonlinear constrained optimization problems while DFRCNLS is developed for equality constrained nonlinear least squares problems and makes use of the least squares structure.



\end{document}